\documentclass[reqno,centertags,a4paper,12pt]{amsart}
\usepackage{amssymb}
\usepackage{amsthm}
\usepackage{eucal}
\usepackage{color, soul, xcolor}
\soulregister\cite{7}
\soulregister\ref{7}
\soulregister\pageref7
\setlength{\textwidth}{16.5cm} \setlength{\textheight}{22cm}
\addtolength{\oddsidemargin}{-1.5cm} \addtolength{\evensidemargin}{-1.5cm}

\usepackage[english]{babel}
\usepackage{graphicx}
\usepackage{subfig}

\newcommand{\R}{\mathbb R}

\newcommand{\Z}{\mathbb Z}
\newcommand{\T}{\mathbb T}

\newcommand{\sgn}{\text{sgn}}


\numberwithin{equation}{section}
\newtheorem{theorem}{Theorem}[section]

\newtheorem{remark}[theorem]{Remark}
\newtheorem{lemma}[theorem]{Lemma}
\newtheorem{corollary}[theorem]{Corollary}

\newtheorem*{TA}{Theorem A}


\begin{document}
\title[Propagation of Regularities and Radius of Analyticity]{On propagation of regularities and evolution of  radius of analyticity  in the solution of the  fifth order KdV-BBM model}

\author{X. Carvajal}
\address{Instituto de Matem\'atica, UFRJ, 21941-909, Rio de Janeiro, RJ, Brazil}
\email{carvajal@im.ufrj.br}
\author{M. Panthee}
\address{IMECC-UNICAMP\\
13083-859, Campinas, S\~ao Paulo, SP,  Brazil}
\email{mpanthee@unicamp.br}


\keywords{Nonlinear dispersive wave equations, Water wave models, KdV equation, BBM equation, Cauchy problems, local \& global  well-posedness, Analyticity, Gevrey Class}
\subjclass[2010]{35A01, 35Q53}
\begin{abstract}
 We consider the initial value problem (IVP) associated to a fifth order KdV-BBM type model that describes the propagation of unidirectional water waves. We prove that  the regularity in the initial data propagates in the solution, in other words no singularities can appear or disappear in the solution to this model. We also prove the local well-posedness of the IVP in the space of the analytic functions, the so called Gevrey class. Furthermore, we discuss the evolution of radius of analyticity in such class by providing explicit formulas for upper and lower bounds. 
\end{abstract}

\maketitle

\section{Introduction}

In this work we consider the following  initial value problem (IVP) 
\begin{equation}\label{5kdvbbm}
\begin{cases}
\eta_t+\eta_x-\gamma_1 \eta_{xxt}+\gamma_2\eta_{xxx}+\delta_1 \eta_{xxxxt}+\delta_2\eta_{xxxxx}+\frac{3}{2}\eta \eta_x+\gamma (\eta^2)_{xxx}-\frac{7}{48}(\eta_x^2)_x-\frac{1}{8}(\eta^3)_x=0,   \\
\eta(x,0)=\eta_0(x),
\end{cases} 
\end{equation}
where
\begin{equation}\label{parametros}
\gamma_1=\frac{1}{2}(b+d-\rho),\qquad 
\gamma_2=\frac{1}{2}(a+c+\rho), 
\end{equation}
with $\rho = b+d-\frac16$, and 
\begin{equation}\label{parametros-1}
\begin{cases}
\delta_1=\frac{1}{4}\,[2(b_1+d_1)-(b-d+\rho)(\frac{1}{6}-a-d)-d(c-a+\rho)], \\
\delta_2 =\frac{1}{4}\,[2(a_1+c_1)-(c-a+\rho)(\frac{1}{6}-a)+\frac{1}{3}\rho], 
\gamma =\frac{1}{24}[5-9(b+d)+9\rho].
\end{cases}
\end{equation}
The parameters appearing in \eqref{parametros} and \eqref{parametros-1}  satisfy $a+b+c+d=\frac{1}{3}$, $\gamma_1+\gamma_2=\frac{1}{6}$, $\gamma=\frac{1}{24}(5-18\gamma_1)$ and $\delta_2-\delta_1=\frac{19}{360}-\frac{1}{6}\gamma_1$ with $\delta_1>0$ and $\gamma_1>0$.

We have two main objectives in this work. First, we plan to study the propagation of regularities in the solution of the IVP \eqref{5kdvbbm}. This sort of problem is widely studied in the literature, for example see \cite{ILP-15, ILP-A1, ILP-A2, LPS-A1} and references therein. In most of these works, wide class of dispersive equations including the Korteweg-de Vries (KdV), Benjamin-Ono (BO), Benjamin-Bona-Mahony (BBM) equations is considered.

In the second part, we plan to study the well-posedness of the IVP \eqref{5kdvbbm} in the spaces of analytic functions, the so called Gevrey class of functions, and the  evolution of radius of analyticity of the solution. 

The  higher order water wave model (\ref{5kdvbbm}) describing the unidirectional propagation of water waves was recently introduced  by Bona et al. \cite{BCPS1} by using the second order approximation in the two-way model, the so-called $abcd-$system derived in \cite{BCS1, BCS2}. In the literature, this model  is also known as  the fifth order KdV-BBM type equation. The IVP \eqref{5kdvbbm} posed on the spatial domain $\R$ was studied by the authors in \cite{BCPS1} considering initial data in $H^s(\R)$ and proving the local well-posedness for $s\geq 1$. More precisely, the local result proved in \cite{BCPS1} is the following.

\begin{TA}\label{Th-A} 
 Assume $\gamma_1, \delta_1 >0$.  
For any $s\geq 1$ and for given  $\eta_0\in H^s(\R)$, there exist a time $T =T(\|\eta_0\|_{H^s})=\dfrac{c_s}{\|\eta_0\|_{H^s}(1+\|\eta_0\|_{H^s})}>0$
 and a unique function  $\eta \in C([0,T];H^s):=X^s_T$ which 
 is a solution of the IVP \eqref{5kdvbbm}, posed with initial data $\eta_0$.  The solution $\eta$ 
varies continuously in $C([0,T];H^s)$ as $\eta_0$ varies in $H^s(\R)$.
\end{TA}

 When the parameter $\gamma$ satisfies $\gamma = \frac7{48}$, the model  \eqref{5kdvbbm}  posed on $\R$ possesses hamiltonian structure  and the flow satisfies
\begin{equation}\label{energy}
 E(\eta(\cdot,t)):=  \int_{\mathbb{R}} \eta^2 + \gamma_1 (\eta_x)^2+\delta_1(\eta_{xx})^2\, dx= E(\eta_0).
\end{equation} 
We note that, this conservation law holds in the periodic case as well.

The energy conservation \eqref{energy} was used in \cite{BCPS1} to prove the global well-posedness for  data in $H^s(\R)$, $s\geq 2$. While, for data with  regularity $ \frac32\leq s< 2$, {\em splitting to high-low frequency parts} technique was used  by the authors in \cite{BCPS1} to get the global well-posedness result. This global well-posedness result was further improved in \cite{CP} for initial data with Sobolev regularity $s\geq 1$. Furthermore, the authors in \cite{CP} showed that the well-posedness result is sharp by proving that the mapping data-solution fails to be continuous at the origin whenever $s<1$. For similar results in the periodic case we refer to \cite{CPP-1} and for the problem posed on the half line we refer to \cite{hongqiu}.

As mentioned earlier, the first main interest of this work is to study the propagation of regularities in the solution of the IVP \eqref{5kdvbbm}. For motivation to this part of the work we mention the recent works \cite{LPS-A1} where the authors deal with a class of nonlinear dispersive equations that include the generalised KdV (gKdV), generalised Benjamin-Ono (gBO), Kadomtsev-Petviashvili (KP) and Benjamin-Bona-Mahony (BBM) equations among others. For other related works in this direction we refer to \cite{ILP-15}, \cite{ILP-A1}, \cite{ILP-A2} and references therein.

The main result regarding the propagation of regularities in the solution of the IVP \eqref{5kdvbbm} is stated as follows.

\begin{theorem}\label{Th-Propagation}
 Let $\gamma_1>0$, $0<\delta_1<\dfrac{\gamma_1^2}4$ and  $\eta_0\in H^s(\R)$ with $s\geq 1$. If for some $k\in \Z^+\cup \{0\}$,  $\theta\in [0, 1]$ and $\Omega\subseteq \R$ open
$${\eta_0}\big|_{\Omega}\in C^{k+\theta}$$
then the corresponding solution $\eta \in X^s_T$ of the  IVP \eqref{5kdvbbm} given by Theorem {\bf A} satisfies that
$${\eta(\cdot, t)}\big|_{\Omega}\in C^{k+\theta},\;\; \forall\; t\in[-T,T].$$
Moreover,
$$\eta, \; \eta_t\in C\big([-T, T]: C^{k+\theta}(\Omega)\big).$$
\end{theorem}

From Theorem \ref{Th-Propagation} we can infer that, in the   function space setting $C^{k+\theta}$,  in the interval $[0, T]$ of local existence,  no singularities can appear or disappear in the solution $\eta(\cdot, t)$ of the IVP \eqref{5kdvbbm}. Furthermore, as a consequence of the Theorem \ref{Th-Propagation} we have the following result.

\begin{corollary}\label{corl-1} Let $\gamma_1>0$ , $0<\delta_1<\dfrac{\gamma_1^2}4$ and  $\eta_0\in H^s(\R)$ with $s\geq 1$. If for $x_0 \in (a,b)$, $k\in \Z^+\cup \{0\}$ and $\theta\in [0, 1]$ we have
 $$\eta_0\big|_{(a,x_0)}, \,\, \eta_0\big|_{(x_0, b)} \in C^{k+\theta}, \quad \textrm{and} \quad \eta_0\big|_{(a, b)} \notin C^{k+\theta}$$
then the corresponding solution $\eta \in X^s_T$ of the  IVP \eqref{5kdvbbm} given by Theorem {\bf A} satisfies that
$${\eta(\cdot, t)}\big|_{(a,x_0)}, \,\, {\eta(\cdot, t)}\big|_{(x_0,b)} \in C^{k+\theta}, \quad \textrm{and} \quad {\eta(\cdot, t)}\big|_{(a,b)} \notin C^{k+\theta}.$$
\end{corollary}
\begin{proof}
We prove this result using a contradiction argument. If possible, suppose that ${\eta(\cdot, t)}\big|_{(a,b)} \in C^{k+\theta}$. Then we have $\eta(x,t+t')\in C^{k+\theta}$  for all $t'\in [-T,T]$. In particular, if $t'=-t$ we would have that $\eta_0\big|_{(a, b)} \in C^{k+\theta}$ resulting in a contradiction.

\end{proof}

The second main interest of this work is to find solutions $\eta(x,t)$ of the  IVP \eqref{5kdvbbm} with real-analytic initial data $\eta_0$ which admit extension as an analytic function to a complex strip $S_{\sigma_0}:=\big\{x+iy:|y|<\sigma_0\big\}$, for some $\sigma_0>0$ at least for a short time. Analytic Gevrey class introduced by Foias and Temam \cite{FT} is a suitable function space for this purpose. After getting this result, a natural question one may ask is whether this property holds globally in time, but with a possibly smaller radius of analyticity $\sigma(t)>0$. In other words, is the solution  $\eta(x,t)$ of the  IVP \eqref{5kdvbbm} with real-analytic initial data $\eta_0$ is analytic in $S_{\sigma(t)}$ for all $t$ and what is the lower-bound of $\sigma(t)$? This question will also be addressed in the second part of this work.

 An early work in this direction is due to Kato and Masuda \cite{KM}. They considered a large class of evolution equations and developed a general method to obtain spatial analyticity of the solution. In particular, the class considered in \cite{KM} contains the KdV equation. Further development  in this field can be found in the works of Hayashi \cite{H}, Hayashi and Ozawa \cite{HO},  de Bouard, Hayashi and Kato \cite{BHK}, Kato and Ozawa \cite{KO}, Bona and Gruji\'c \cite{BG-1}, Bona, Gruji\'c and Kalisch \cite{BGK-1, BGK-2}, Gruji\'c and Kalisch \cite{GK-1, GK-2}, Zhang \cite{Z1, Z2} and references there in. In recent literature, many authors have devoted much effort to get analytic solutions to several evolution equations, see for example \cite{BHG-17, HP-20, HP-18, HPS-17, SS-19, SS-18, ST-17, SdS-15} and references therein.

 Now, we  introduce some notations and define function spaces in which this work will be developed.  Throughout this work we use $C$ to denote a constant that may vary from one line to the next.
 
 The Fourier transform of a function $f$ is defined by
\begin{equation*}
\hat{f}(\xi) =
\frac{1}{\sqrt{(2\pi)}}\int_{\mathbb{R}}e^{-ix\xi}f(x)\,dx,
\end{equation*}
whose inverse transform is given by 
\begin{equation*}
f(x) =
\frac{1}{\sqrt{(2\pi)}}\int_{\mathbb{R}}e^{ix\xi}\hat f(\xi)\,d\xi.
\end{equation*}
We define a Fourier multiplier operator $J$ by
$$
\widehat{J^sf}(\xi) =\langle\xi\rangle^s \hat{f}(\xi).
$$
 For given $s\in \R$, we define the usual $L^2$-based Sobolev space  $H^s$ denotes of order $s$ with norm
$$
\|f\|_{H^s}^2 = \int_{\R}\langle\xi\rangle^{2s}|\hat f(\xi)|^2\,d\xi= \|J^sf\|_{L^2(\R)}^2=:\|J^sf\|^2,
$$
where $\langle\cdot\rangle = 1+|\cdot|$.

For $\sigma >0$ and $s\in \R$, the analytic Gevrey class $G^{\sigma, s}$ is defined as  the subspace of $L^2(\R)$ with norm,
\begin{equation*}\label{def-G1}
 \|f\|_{G^{\sigma, s}}^2 = \int_{\R} \langle \xi\rangle^{2s}
 e^{2\sigma\langle\xi\rangle}|\hat{f}(\xi)|^2d\xi.
\end{equation*}
The Gevrey norm of order $(\sigma, s)$ can be written in terms of the operator $J^s$ as 
\begin{equation}\label{def-G2}
\|f\|_{G^{\sigma, s}} =\|J^se^{\sigma J}f\|_{L^2(\R)}:=\|J^se^{\sigma J}f\|.
\end{equation}
To make the notation more compact we define $ J^{s, \sigma}\eta:=J^s e^{\sigma J} \eta$ so that the Gevrey norm can be expressed as  
\begin{equation*}\label{def-G3}\|\eta\|_{G^{\sigma,s}} =\|J^{s, \sigma}\eta\|_{L^2(\R)}=:\|J^{s, \sigma}\eta\|.
\end{equation*}

Note that, a function in the Gevrey class $G^{\sigma, s}$ is a restriction to the real axis of a function analytic on a symmetric strip of width $2\sigma$. Hence, our interest is to prove the well-posedness result for the IVP \eqref{5kdvbbm} for given data in $G^{\sigma, s}$ for appropriate $s$ and analyse how $\sigma =\sigma(t)$ evolves  in time.


Now we are in position to state the main results of the second part of this work. The first result deals with  the local existence of the IVP \eqref{5kdvbbm}  for given data in the usual Gevrey space $G^{\sigma, s}(\R)$ and reads as follows.

\begin{theorem}\label{mainTh1}
Let $s\geq 1$, $\sigma>0$ and $\eta_0\in G^{\sigma, s}(\R)$ be given. Then there exist a time $T =T(\|\eta_0\|_{ G^{\sigma, s}}>0$ and $\eta\in C([0, T];  G^{\sigma, s})$  satisfying the  IVP \eqref{5kdvbbm}.

\end{theorem}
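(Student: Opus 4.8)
The plan is to recast \eqref{5kdvbbm} as an integral equation and solve it by the contraction mapping principle in $C([0,T];G^{\sigma,s})$. Collecting the terms carrying a time derivative, the left side of \eqref{5kdvbbm} contains $\partial_t(\eta-\gamma_1\eta_{xx}+\delta_1\eta_{xxxx})$, which on the Fourier side is $\varphi(\xi)\,\partial_t\hat\eta$ with $\varphi(\xi):=1+\gamma_1\xi^2+\delta_1\xi^4$. Since $\gamma_1>0$ and $\delta_1>0$ we have $\varphi(\xi)\ge1$ and $\varphi(\xi)\sim\langle\xi\rangle^4$, so, writing $D=-i\partial_x$, the multiplier $\varphi(D)$ is invertible and dividing \eqref{5kdvbbm} by $\varphi(D)$ puts the equation in the form $\partial_t\eta=-i\,\omega(D)\eta-\varphi(D)^{-1}N(\eta)$, where $\omega(\xi)=(\xi-\gamma_2\xi^3+\delta_2\xi^5)/\varphi(\xi)$ is real and $N(\eta)=\tfrac32\eta\eta_x+\gamma(\eta^2)_{xxx}-\tfrac{7}{48}(\eta_x^2)_x-\tfrac18(\eta^3)_x$. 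The linear propagator $W(t):=e^{-it\omega(D)}$ is a Fourier multiplier of modulus one, hence unitary on every $L^2$-based space; in particular it commutes with $J^{s,\sigma}=J^se^{\sigma J}$ and satisfies $\|W(t)f\|_{G^{\sigma,s}}=\|f\|_{G^{\sigma,s}}$ for all $t$. This is the only place the (otherwise delicate) dispersion enters, and it costs nothing.

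First I would fix $\sigma>0$ and $s\ge1$ and study the map $\Psi(\eta)(t):=W(t)\eta_0-\int_0^t W(t-t')\,\varphi(D)^{-1}N(\eta)(t')\,dt'$ on a ball $\{\|\eta\|_{C([0,T];G^{\sigma,s})}\le R\}$. Using unitarity of $W$ and Minkowski's inequality, $\|\Psi(\eta)\|_{C([0,T];G^{\sigma,s})}\le\|\eta_0\|_{G^{\sigma,s}}+T\sup_{[0,T]}\|\varphi(D)^{-1}N(\eta)\|_{G^{\sigma,s}}$, so everything reduces to the nonlinear estimate $\|\varphi(D)^{-1}N(\eta)\|_{G^{\sigma,s}}\lesssim\|\eta\|_{G^{\sigma,s}}^2+\|\eta\|_{G^{\sigma,s}}^3$ together with its bilinear/trilinear Lipschitz analogue for $N(\eta)-N(\zeta)$. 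Granting these, choosing $R=2\|\eta_0\|_{G^{\sigma,s}}$ and $T=T(\|\eta_0\|_{G^{\sigma,s}})$ small makes $\Psi$ a contraction, and its fixed point is the desired solution; continuity in time is inherited from the strong continuity of $t\mapsto W(t)$ on $G^{\sigma,s}$ and from the Duhamel integral.

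The heart of the argument is the nonlinear estimate, resting on two facts. First, the model is smoothing: since $|(i\xi)^k/\varphi(\xi)|\lesssim\langle\xi\rangle^{k-4}$, the operator $\varphi(D)^{-1}\partial_x^k$ maps $H^m$ into $H^{m+4-k}$, so after dividing by $\varphi(D)$ every nonlinear term gains derivatives, the least favorable being $\gamma\,\varphi(D)^{-1}(\eta^2)_{xxx}$ and $\tfrac{7}{48}\varphi(D)^{-1}(\eta_x^2)_x$, smoothing of orders $1$ and $3$. Second, Sobolev bounds pass to Gevrey bounds: from $\langle\xi\rangle\le\langle\xi-\zeta\rangle+\langle\zeta\rangle$ one gets $e^{\sigma\langle\xi\rangle}\le e^{\sigma\langle\xi-\zeta\rangle}e^{\sigma\langle\zeta\rangle}$, whence, defining $U,V$ by $\hat U=e^{\sigma\langle\cdot\rangle}|\hat f|$ and $\hat V=e^{\sigma\langle\cdot\rangle}|\hat g|$ (so that $\|U\|_{H^s}=\|f\|_{G^{\sigma,s}}$, and likewise for $V$), the pointwise bound $e^{\sigma\langle\xi\rangle}|\widehat{fg}(\xi)|\le\widehat{UV}(\xi)$ holds. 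This reduces every Gevrey product estimate to the corresponding Sobolev estimate for the functions $U,V$ with nonnegative Fourier transform. Applying this to the worst term, with $u$ given by $\hat u=e^{\sigma\langle\cdot\rangle}|\hat\eta|$ (so the auxiliary function attached to $\eta_x$ is $|D|u$, with $\||D|u\|_{H^{s-1}}\le\|u\|_{H^s}=\|\eta\|_{G^{\sigma,s}}$), one finds $\|\varphi(D)^{-1}(\eta_x^2)_x\|_{G^{\sigma,s}}\lesssim\|(|D|u)^2\|_{H^{s-3}}\lesssim\||D|u\|_{H^{s-1}}^2\le\|\eta\|_{G^{\sigma,s}}^2$, where the one-dimensional product estimate $\|hk\|_{H^{s-3}}\lesssim\|h\|_{H^{s-1}}\|k\|_{H^{s-1}}$ is valid exactly when $2(s-1)\ge0$, i.e. $s\ge1$. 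The term $\gamma\,\varphi(D)^{-1}(\eta^2)_{xxx}$ is bounded by $\|u^2\|_{H^{s-1}}\lesssim\|u\|_{H^s}^2$ using that $H^s$ is an algebra for $s>1/2$, the cubic term $\varphi(D)^{-1}(\eta^3)_x$ is smoothed of order $3$ and estimated by $\|u\|_{H^s}^3$, and the quadratic term $\varphi(D)^{-1}(\eta\eta_x)$ is similar and less restrictive.

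I expect the main obstacle to be exactly this endpoint nonlinear estimate at $s=1$: one must play the order-$4$ smoothing of $\varphi(D)^{-1}$ against the three derivatives falling on a product of two factors each only in $H^{s-1}$, so the product lands in the negative-index space $H^{s-3}$. The accounting closes only because, in one space dimension, $H^{s-1}\cdot H^{s-1}\hookrightarrow H^{s-3}$ at the borderline $s=1$ (equivalently $L^2\cdot L^2\hookrightarrow H^{-2}$, which follows from $L^1\hookrightarrow H^{-1}$ and $H^1\hookrightarrow L^\infty$); this is what pins the regularity threshold to $s\ge1$ and matches the $H^s$ theory of \cite{BCPS1, CP}. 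Once the nonlinear estimate and its Lipschitz version are in hand, the contraction and the continuity in time of the fixed point are routine.
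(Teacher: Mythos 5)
Your proposal is correct and follows essentially the same route as the paper: rewrite the equation by inverting $\varphi(\partial_x)$ (whose positivity comes from $\gamma_1,\delta_1>0$), exploit the unitarity of the resulting group on $G^{\sigma,s}$, and close a contraction in $C([0,T];G^{\sigma,s})$ with radius $2\|\eta_0\|_{G^{\sigma,s}}$, using the submultiplicativity $e^{\sigma\langle\xi\rangle}\le e^{\sigma\langle\xi-\xi_1\rangle}e^{\sigma\langle\xi_1\rangle}$ to reduce the Gevrey multilinear bounds to Sobolev-level ones. The only organizational difference is that the paper packages these bounds as a Bona--Tzvetkov-type bilinear estimate for the multiplier $|\xi|/(1+\xi^2)$ (Lemma \ref{BT1}, proved by Cauchy--Schwarz) together with a trilinear estimate via $H^{1/6}\hookrightarrow L^3$, whereas you pass to majorant functions with nonnegative Fourier transform and invoke standard product laws (with the endpoint $L^2\cdot L^2\hookrightarrow H^{-2}$); in both treatments the threshold $s\ge 1$ is forced by the same worst term $\varphi(\partial_x)^{-1}\partial_x(\eta_x^2)$.
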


The next main result deals with the evolution of the radius of analyticity in time. More precisely,  we have  the following global well-posedness result.

\begin{theorem}
\label{global}
Let $\sigma:=\sigma(t)>0$ and  $\eta_0\in G^{\sigma, 2}(\R)$. Then, the solution $\eta$ of the IVP \eqref{5kdvbbm} with initial value $\eta_0$ remains analytic for all positive times. More precisely $u \in C(0,T; G^{\sigma, 2}(\R))$ for all $T>0$ where a lower bound  of the radius of analyticity $\sigma(t)$ is given by (see \eqref{3x11-m11})
$$
\sigma_0 \exp \{-(\| \eta_0\|_{G^{\sigma, 2}}+2\| \eta_0\|_{G^{\sigma, 2}}^2)t-\frac32 t^{3/2}(\|\eta_0\|_{H^2}^{3/2}+\|\eta_0\|_{H^2}^2)-t^2(\|\eta_0\|_{H^2}^{3/2}+\|\eta_0\|_{H^2}^2)^2 \}.
$$
and an upper bound is given by 
$$
C \sigma_0 \exp\{ -\|   \eta_0 \|_{H^2}^2 t\}.
$$
Moreover
\begin{equation}\label{3x11-m9}
\begin{split}
\| \eta(t)\|_{G^{\sigma, 2}} \leq \| \eta_0\|_{G^{\sigma, 2}}+Ct^{1/2}(\|\eta_0\|_{H^2}^{3/2}+\|\eta_0\|_{H^2}^2).
\end{split}
\end{equation}
\end{theorem}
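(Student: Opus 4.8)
The plan is to run a Gevrey energy estimate with a time--dependent radius $\sigma=\sigma(t)$, exploiting the strong smoothing of the BBM--type operator. First I would recast \eqref{5kdvbbm} in evolution form. Setting $\varphi(\partial_x)=1-\gamma_1\partial_x^2+\delta_1\partial_x^4$, whose symbol $\varphi(\xi)=1+\gamma_1\xi^2+\delta_1\xi^4\sim\langle\xi\rangle^4$ is positive since $\gamma_1,\delta_1>0$, and collecting the nonlinearity as $\partial_x\mathcal{N}(\eta)$ with $\mathcal{N}(\eta)=\tfrac34\eta^2+\gamma\partial_x^2(\eta^2)-\tfrac{7}{48}\eta_x^2-\tfrac18\eta^3$, the equation becomes $\eta_t=-\varphi^{-1}\partial_x(1+\gamma_2\partial_x^2+\delta_2\partial_x^4)\eta+\varphi^{-1}\partial_x\mathcal{N}(\eta)$. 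The linear part is a skew--adjoint Fourier multiplier (odd, purely imaginary symbol) that commutes with $J^{s,\sigma}$ and is unitary on every $G^{\sigma,s}$, while the decisive point is that $\varphi^{-1}$ recovers four derivatives, so each nonlinear contribution is smoothing.

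Second, I would differentiate the Gevrey energy. Using $\partial_t e^{\sigma(t)J}=\dot\sigma\,Je^{\sigma J}$ and the skew--adjointness of the linear part, one obtains
\[
\tfrac12\tfrac{d}{dt}\|\eta\|_{G^{\sigma,2}}^2=\dot\sigma\,\|J^{5/2,\sigma}\eta\|^2+\langle J^{2,\sigma}\varphi^{-1}\partial_x\mathcal{N}(\eta),\,J^{2,\sigma}\eta\rangle,
\]
the linear term dropping out. When $\dot\sigma\le 0$ the first term is a favorable dissipation--like term which will absorb the half--derivative excess produced by the nonlinearity.

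The heart of the matter is the nonlinear form. From $\langle\xi\rangle\le\langle\xi-\zeta\rangle+\langle\zeta\rangle$ one has the Gevrey majorization $e^{\sigma J}(fg)\preceq(e^{\sigma J}f)(e^{\sigma J}g)$ on the Fourier side, together with the algebra property of $G^{\sigma,s}$ for $s>1/2$. Since the four--derivative smoothing makes $J^2\varphi^{-1}\partial_x^3$ and $J^2\varphi^{-1}\partial_x$ symbols of order $\langle\xi\rangle$, each quadratic term pairs as $\langle J^{1/2,\sigma}(\eta^2),J^{5/2,\sigma}\eta\rangle$ and the cubic term as $\langle J^{1/2,\sigma}(\eta^3),J^{1/2,\sigma}\eta\rangle$. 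To close in terms of the conserved quantity I would split frequencies: in the high--frequency regime the whole form is dominated by $|\dot\sigma|\,\|J^{5/2,\sigma}\eta\|^2$ and absorbed, while in the complementary regime $e^{\sigma J}$ acts as a bounded multiplier and the remainder is controlled by $\|\eta\|_{H^2}$; the weight inequality $\langle\xi\rangle^{1/2}e^{\sigma\langle\xi\rangle}\lesssim\sigma^{-1/2}e^{2\sigma\langle\xi\rangle}$ converts the leftover derivative into an inverse power of $\sigma$. This yields an energy inequality of the form
\[
\tfrac{d}{dt}\|\eta\|_{G^{\sigma,2}}^2\le\Big[2\tfrac{\dot\sigma}{\sigma}+C\big(\|\eta\|_{G^{\sigma,2}}+2\|\eta\|_{G^{\sigma,2}}^2\big)\Big]\|J^{5/2,\sigma}\eta\|^2+C\big(\|\eta\|_{H^2}^3+\|\eta\|_{H^2}^4\big).
\]
Choosing $\sigma(t)$ so that the bracket vanishes, i.e. $\tfrac{d}{dt}\ln\sigma=-(\|\eta\|_{G^{\sigma,2}}+2\|\eta\|_{G^{\sigma,2}}^2)$, kills the high--frequency term, and the conserved energy \eqref{energy} controls $\|\eta(t)\|_{H^2}\lesssim\|\eta_0\|_{H^2}$. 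Integrating in time and using $\sqrt{a+b}\le\sqrt a+\sqrt b$ produces exactly \eqref{3x11-m9}. Feeding \eqref{3x11-m9} back into the logarithmic inequality and integrating gives the stated exponential lower bound, while global continuation in $G^{\sigma(t),2}$ follows from Theorem~\ref{mainTh1} together with the a priori bound \eqref{3x11-m9}.

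For the upper bound I would instead establish a lower bound on the flux of energy to high frequencies: a coercivity estimate showing that the nonlinear form cannot wholly cancel, forcing $\tfrac{d}{dt}\ln\sigma\le -c\,\|\eta_0\|_{H^2}^2$ if $\|\eta(t)\|_{G^{\sigma,2}}$ is to remain finite. I expect this to be the main obstacle: unlike the upper estimates driving the lower bound, it requires proving that the cascade to high frequencies is genuinely nondegenerate, with no hidden cancellation in $\mathcal{N}$, and is the step where the precise algebraic structure of the nonlinearity must be used.
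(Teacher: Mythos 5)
Your argument for the a priori bound \eqref{3x11-m9}, the global continuation, and the \emph{lower} bound on $\sigma(t)$ is essentially the paper's: the same Gevrey energy computation with time-dependent radius, the same observation that the linear multiplier is skew-adjoint and drops out, a dissipation term $\dot\sigma\|J^{5/2}e^{\sigma J}\eta\|^2$, an ODE choice of $\sigma$ that annihilates the high-frequency bracket, conservation \eqref{consH2} to freeze $\|\eta(t)\|_{H^2}\sim\|\eta_0\|_{H^2}$, integration in time to get \eqref{3x11-m9}, and feeding \eqref{3x11-m9} back into $\frac{d}{dt}\ln\sigma$ to obtain the exponential lower bound. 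The only real difference of mechanism is how you trade the half-derivative excess: you split high and low frequencies and use $\langle\xi\rangle^{1/2}e^{-\sigma\langle\xi\rangle}\lesssim\sigma^{-1/2}$, whereas the paper invokes the Bona--Gruji\'c inequality $\|J^{s,\sigma}u\|\le c_1\|J^su\|+c_2\sigma^r\|J^{s+r,\sigma}u\|$ (Lemma \ref{lem31}) followed by the interpolation Lemma \ref{lem32} to replace $J^{s+1/3}$ and $J^{s+1/4}$ by $\|J^{s+1/2,\sigma}\eta\|$-powers. Both routes land on the same differential inequality, so this part of your plan is sound and faithful in substance.

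The genuine gap is your treatment of the \emph{upper} bound. You read it as an intrinsic statement about the maximal radius of analyticity of the solution, requiring a coercivity/nondegeneracy estimate on the frequency cascade, and you correctly flag that you cannot prove such a thing --- indeed no such result is needed, and none is in the paper. The upper bound in Theorem \ref{global} concerns the \emph{specific} function $\sigma(t)$ constructed in the proof: since $\sigma$ is chosen to satisfy \eqref{3x11-m10} with equality, namely $-\sigma'=\sigma\big(\|J^{2}e^{\sigma J}\eta\|+\|J^{2}e^{\sigma J}\eta\|^2\big)$, one immediately gets a lower bound on the decay rate by discarding the first term and using $e^{\sigma\langle\xi\rangle}\ge 1$ together with \eqref{consH2}:
\begin{equation*}
-\sigma'\;\ge\;\sigma\|J^{2}e^{\sigma J}\eta\|^2\;\ge\;\sigma\|J^{2}\eta\|^2\;\gtrsim\;\sigma\|J^{2}\eta_0\|^2,
\end{equation*}
and integrating yields $\sigma(t)\le C\sigma_0 e^{-c\|\eta_0\|_{H^2}^2 t}$. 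So the step you identified as ``the main obstacle'' is a one-line consequence of the ODE defining $\sigma$; your proposal, as written, leaves the upper bound unproved because you set out to prove a much stronger (and open-ended) statement than the theorem actually asserts. To repair your write-up, replace the coercivity program by the observation above, making explicit that both bounds are statements about the radius function produced by the construction, not about an intrinsically maximal radius.
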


The local well-posedness will be established using multilinear estimates combined with a contraction mapping argument.  The global well-posedness
in the spaces $H^s$ with $s \geq 2$ will be obtained via energy-type arguments together with the local theory.   

As in the continuous case, with some restriction on the coefficients of the equation, we can also prove the  global well-posedness result in the periodic case too, i.e.,  for given data  $\eta_0\in G^{\sigma, 2}(\T)$.

Before finishing this section we record the following estimates that will be used in sequel.

\begin{remark}
Observe that if $\eta(x,t)$ is a solution of the IVP \eqref{5kdvbbm}, $c_1=\min \{\gamma_1, \delta_1\}$ and $C_1=\max\{\gamma_1, \delta_1\}$, then
\begin{equation}\label{consH2-1}
c_1\|\eta(\cdot,t)\|_{H^2}^2 \le E(\eta(\cdot,t))=E(\eta_0)\leq C_1 \|\eta(\cdot,t)\|_{H^2}^2.
\end{equation}

Also, it is clear that 
$
c_1\|\eta(\cdot,t)\|_{H^2}^2 \leq C_1 \|\eta_0\|_{H^2}^2
$
and 
$
c_1 \|\eta_0\|_{H^2}^2 \leq C_1\|\eta(\cdot,t)\|_{H^2}^2
$. Therefore
\begin{equation}\label{consH2}
\frac{c_1}{C_1}\|\eta_0\|_{H^2}^2 \leq \|\eta(\cdot,t)\|_{H^2}^2 \leq \frac{C_1}{c_1}\|\eta_0\|_{H^2}^2.
\end{equation}
\end{remark}


\section{Propagation of Regularities}
This part of our work is inspired by the recent work of Linares et. al \cite{LPS-A1} where the authors studied the  propagation of the regularities in the solutions to the BBM equation.

As discussed in the introduction, for the well-posedness results to the IVP \eqref{5kdvbbm} we refer to \cite{BCPS1, CP}  where the authors proved the sharp local well-possednes and the global well-posedness in $H^1(\R)$. In particular if the initial data $\eta_0$ is in $H^1(\R)$, then for any $T>0$ the solution of the IVP \eqref{5kdvbbm} is in $C([0,T]; H^1(\R))=:X^1_T$. 

In what follows we provide proof of Theorem \ref{Th-Propagation}. We will closely  follow the technique introduced in \cite{LPS-A1} in the context of the third order BBM equation.

\begin{proof}[Proof of Theorem \ref{Th-Propagation}]
For technical reasons we make a change of variables given by $\eta(x, t) \equiv \eta (x-\dfrac{\delta_2}{\delta_1} t, t)$. With this change of variables the equation \eqref{5kdvbbm} transforms to
\begin{equation}\label{5kdvbbmB}
\begin{cases}
\eta_t+\delta_3 \eta_x-\gamma_1 \eta_{xxt}+\gamma_3\eta_{xxx}+\delta_1 \eta_{xxxxt}+\dfrac{3}{2}\eta \eta_x+\gamma (\eta^2)_{xxx}-\dfrac{7}{48}(\eta_x^2)_x-\dfrac{1}{8}(\eta^3)_x=0,   \\
\eta(x,0)=\eta_0(x),
\end{cases} 
\end{equation}
where $\delta_3=1-\dfrac{\delta_2}{\delta_1}$ and $\gamma_3=\gamma_2+\gamma_1\dfrac{\delta_2}{\delta_1}$. This sort of change of variables was used in \cite{BCG} and \cite{CPP-1}. This transformation  eliminates the fifth order term $\eta_{xxxxx}$ and only alters the coefficients of the terms $\eta_{x}$ and $\eta_{xxx}$.

We provide detailed proof considering $\eta_0\in H^1(\R)$. For $s>1$ the proof follows analogously. From Theorem {\bf A},  there exist $T= T(\|\eta_0\|_{H^1})>0$  and a unique solution $\eta= \eta(x,t)$ of the IVP \eqref{5kdvbbmB} such that
\begin{equation*}\label{eqR1}
\eta\in C([0, T] ; H^1(\R)).
\end{equation*}

We define  a multiplication operator $\mathcal{J}$ by
$$\widehat{ \mathcal{J}f}(\xi)= \varphi(\xi)\widehat{f}(\xi),$$
where 
\begin{equation*}\label{var-phi}
\varphi(\xi) = 1+\gamma_1\xi^2+\delta_1\xi^4.
\end{equation*}

Using the operator $\mathcal{J}$, the equation \eqref{5kdvbbmB}, can be written as
\begin{equation}\label{Reg1}
 \mathcal{J}\eta_t+\delta_3\eta_x+\gamma_3\eta_{xxx}+\frac{3}{2}\eta \eta_x+\gamma (\eta^2)_{xxx}-\frac{7}{48}(\eta_x^2)_x-\frac{1}{8}(\eta^3)_x=0,
\end{equation}
and consequently
\begin{equation}\label{Reg1}
\eta_t=\mathcal{J}^{-1}\partial_x\Big(-\delta_3\eta-\frac{3}{4}\eta^2+\frac{1}{8}\eta^3+   \frac{7}{48} \eta_x^2 \Big)-\mathcal{J}^{-1}\partial_x^3\big(\gamma_3\eta+\gamma \eta^2 \big).
\end{equation}

In what follows, we will work on the  operators $\mathcal{J}^{-1}\partial_x$ and $\mathcal{J}^{-1}\partial_x^3 $.  For technical reason we suppose that $\gamma_1^2-4\delta_1 \geq 0$ and write the operator $\mathcal{J}$ as
$$
\mathcal{J}=\mathcal{J}_1\mathcal{J}_2,  
$$
where 
$$
\widehat{ \mathcal{J}_1f}(\xi)= (1+a_1^2\xi^2)\widehat{f}(\xi), \quad \widehat{ \mathcal{J}_2f}(\xi)= (1+a_2^2\xi^2)\widehat{f}(\xi),
$$
with $a_1^2 a_2^2=\delta_1$, $a_1^2+ a_2^2=\gamma_1$, $a_1>0$, $a_2>0$. 
\vspace{5mm}

\noindent
$\underline{\textrm{\bf The operator }\, \mathcal{J}^{-1}\partial_x}${\bf :}  To analyse this operator we recall  the following Fourier transform
\begin{equation}\label{TFour}
\mathcal{F}(e^{-|y|/a})(\xi)=\dfrac{a\sqrt{2/\pi}}{1+a^2\xi^2}, \quad \textrm{and} \quad \mathcal{F}(e^{-|y|/a} \sgn(y))(\xi)=\dfrac{-i \sqrt{2/\pi} \,a^2}{1+a^2\xi^2}\xi.
\end{equation}
Now, using the definitions of the operators $\mathcal{J}_1$ and $\mathcal{J}_2$, and the relations \eqref{TFour} 
we have
\begin{equation}\label{op1}
\mathcal{J}^{-1}\partial_x f(x)\sim [(e^{-|y|/a_1} \sgn(y) )\ast(e^{-|y|/a_2} )\ast f(y)](x).
\end{equation}
\vspace{2mm}

\noindent
$\underline{\textrm{\bf The operator }\, \mathcal{J}^{-1}\partial_x^3}${\bf :} To analyse this operator we consider two different cases:\\

\begin{itemize}
\item{\bf Case (i)}: $\underline{a_1^2\neq a_2^2}$.
 In this case, using partial fractions, we observe that
$$
\dfrac{\xi^3}{(1+a_1^2\xi^2)(1+a_2^2\xi^2)}=\dfrac{1}{a_2^2-a_1^2}\left(  \dfrac{\xi}{1+a_1^2\xi^2}- \dfrac{\xi}{1+a_2^2\xi^2}\right).
$$
Consequently we can write
\begin{equation}\label{op2}
\mathcal{J}^{-1}\partial_x^3 f(x)\sim  [(e^{-|y|/a_1} \sgn(y) )\ast f(y)](x)- [(e^{-|y|/a_2} \sgn(y) )\ast f(y)](x).
\end{equation}

\item {\bf  Case (ii)}: $\underline{a_1^2=a_2^2}$. In this case,   using partial fractions
$$
\dfrac{\xi^3}{(1+a_1^2\xi^2)(1+a_2^2\xi^2)}=\dfrac{1}{a_1^2}\left(  \dfrac{\xi}{1+a_1^2\xi^2}- \dfrac{\xi}{(1+a_1^2\xi^2)^2}\right).
$$
Therefore, similarly to \eqref{op1}, one has
\begin{equation}\label{op3}
\mathcal{J}^{-1}\partial_x^3 f(x)\sim  [(e^{-|y|/a_1} \sgn(y) )\ast f(y)](x)- [(e^{-|y|/a_1} \sgn(y) )\ast(e^{-|y|/a_1})\ast f(y)](x).
\end{equation}

\end{itemize}

\vspace{5mm}
Since $\eta \in C([0, T ] : H^1(\R))$, we have $-\frac{3}{4}\eta^2+\frac{1}{8}\eta^3+   \frac{7}{48} \eta_x^2 \in C([0, T ] : L^1(\R))$. Therefore, in the light of \eqref{op1},
\begin{equation}\label{RT-1}
\mathcal{J}^{-1}\partial_x\left(-\delta_3\eta-\frac{3}{4}\eta^2+\frac{1}{8}\eta^3+   \frac{7}{48} \eta_x^2 \right)   \in C([0, T ] : C_b(\R)),
\end{equation}
where $C_b(\R) = C(\R) \cap L^\infty(\R)$.\\

Also, $\gamma_3\eta+\gamma \eta^2 \in C([0, T ] : H^1(\R))$. So, using Sobolev embedding theorem we have that
\begin{equation}\label{RT-2}
\mathcal{J}^{-1}\partial_x^3\left(\gamma_3\eta+\gamma \eta^2 \right) \in C([0, T ] : H^2(\R)) \hookrightarrow C([0, T ] : C^1_\infty(\R)),
\end{equation}
where
$$
C^1_\infty(\R))=\{u: \R \to \R; \quad \textrm{u is continuously diferentiable with} \quad \lim_{|x| \to \infty} u(x)=0\}.
$$

Integrating \eqref{Reg1} in time variable, we get
\begin{equation}\label{regul1}
\begin{split}
\eta(x,t)&=\eta_0-\int_0^t \mathcal{J}^{-1}\partial_x\left(\delta_3\eta+\frac34\eta^2-\frac18\eta^3-\frac7{48}\eta_x^2 \right)(x,t')dt'\\
&\qquad-\int_0^t\mathcal{J}^{-1}\partial_x^3\left(\gamma_3\eta+\gamma \eta^2 \right)(x,t') dt'\\
&=:\eta_0-w(x,t).
\end{split}
\end{equation}

Note that, from \eqref{RT-1} and \eqref{RT-2}, one has
\begin{equation}\label{RT-3}
w\in C([0, T]; C_b(\R)).
\end{equation}
Hence, if  $\eta_0\big|_{\Omega} \in C(\Omega)$ for some open $\Omega\subset \R$, we can conclude that
\begin{equation}
\label{RT-4}
\eta\big|_{\Omega\times[0, T]} \in C([0, T]; C(\Omega)).
\end{equation}

Now, differentiating $w(x,t)$ with respect to the space variable $x$, we obtain
\begin{equation}\label{regul1.1}
\begin{split}
\partial_xw(x)&=\int_0^t \mathcal{J}^{-1}\partial_x^2\left(\delta_3\eta+\frac34\eta^2-\frac18\eta^3-\frac7{48}\eta_x^2 \right)(x,t')d t'\\
&\qquad+\int_0^t\mathcal{J}^{-1}\partial_x^4\left(\gamma_3\eta+\gamma \eta^2 \right)(x,t') dt'.
\end{split}
\end{equation}

Observe that the definition of operator $\mathcal{J}$ implies  
$(I-\gamma_1\partial_x^2+\delta_1 \partial_x^4)\mathcal{J}^{-1}=I.$ Consequently
\begin{equation}\label{regul2}
\begin{split}
\delta_1 \partial_x^4\mathcal{J}^{-1}=I-\mathcal{J}^{-1}+\gamma_1\partial_x^2\mathcal{J}^{-1}.
\end{split}
\end{equation}

Hence, using \eqref{regul2} in   \eqref{regul1.1} yields
\begin{equation}\label{regul3}
\begin{split}
\partial_xw(x,t)&=\dfrac{1}{\delta_1}\int_0^t \left(\gamma_3\eta+\gamma \eta^2 \right)(x,t') d t'-\dfrac{1}{\delta_1} \int_0^t\mathcal{J}^{-1}\left(\gamma_3\eta+\gamma \eta^2 \right)(x,t') dt'\\
&\qquad  +\dfrac{1}{\delta_1}\int_0^t \mathcal{J}^{-1}\partial_x^2\left((\delta_3+\gamma_3)\eta+\big(\frac34+\gamma\big)\eta^2-\frac18\eta^3-\frac7{48} \eta_x^2 \right)(x,t')dt'.
\end{split}
\end{equation}

From \eqref{RT-4}, we have
\begin{equation}
\label{RT-5}
\gamma_3\eta+\gamma \eta^2\in  C([0, T]; C(\Omega)).
\end{equation}

Similarly as above, using the Fourier transforms \eqref{TFour}, we have
\begin{equation}\label{op4}
\mathcal{J}^{-1} f(x)\sim [(e^{-|y|/a_1}  )\ast(e^{-|y|/a_2} )\ast f(y)](x).
\end{equation}
and
\begin{equation}\label{op5}
\mathcal{J}^{-1}\partial_x^2 f(x)\sim [(e^{-|y|/a_1} \sgn(y) )\ast(e^{-|y|/a_2}\sgn(y) )\ast f(y)](x).
\end{equation}
Using \eqref{op4}, \eqref{op5} and an analogous argument as in \eqref{RT-1} and \eqref{RT-2}, one gets
\begin{equation}
\label{RT-6}
\mathcal{J}^{-1}\!\left(\gamma_3\eta+\gamma \eta^2\right),\; \; \mathcal{J}^{-1}\partial_x^2\left(\!\!(\delta_3+\gamma_3)\eta+(\frac34+\gamma)\eta^2-\frac18\eta^3-\frac7{48} \eta_x^2 \right) \in C([0, T]; C_b(\R)).
\end{equation}

Now, from \eqref{regul3}, \eqref{RT-5} and \eqref{RT-6} we can infer that
\begin{equation}
\label{RT-7}
w\big|_{\Omega\times[0, T]} \in C([0, T]; C^1(\Omega)),
\end{equation}
whenever $\eta_0\in C(\Omega)$ for some open $\Omega\subset \R$.

Therefore, for any $\theta\in (0, 1]$, if  $\eta_0\big|_{\Omega} \in C^{\theta}(\Omega)$ for some open $\Omega\subset \R$, we  have from \eqref{regul1} that
\begin{equation}
\label{RT-7}
\eta\big|_{\Omega\times[0, T]} \in C([0, T]; C^{\theta}(\Omega)).
\end{equation}

Now, repeating the above procedure for  $\eta_0\big|_{\Omega} \in C^{1}(\Omega)$ for some open $\Omega\subset \R$, we obtain
\begin{equation}
\label{RT-8}
\eta\big|_{\Omega\times[0, T]} \in C([0, T]; C^{1}(\Omega)),
\end{equation}
and from \eqref{regul1.1}
\begin{equation}
\label{RT-9}
w \in C([0, T]; C^{2}(\Omega)).
\end{equation}

Hence, for any $\theta\in (0, 1]$,  if  $\eta_0\big|_{\Omega} \in C^{1+\theta}(\Omega)$ for some open $\Omega\subset \R$, we can conclude from \eqref{Reg1}, \eqref{RT-1}, \eqref{RT-2} and  \eqref{regul1} that
\begin{equation*}
\label{RT-10}
\eta\big|_{\Omega\times[0, T]} \in C^1([0, T]; C^{1+\theta}(\Omega)).
\end{equation*}

Finally, repeating the above procedure, one can conclude the proof of the Theorem.
\end{proof}

\setcounter{equation}{0}
\section{Local Well-posedness Theory in $G^{\sigma, s}$, $s\geq 1$}\label{sec-3}



In this section we focus on the local well-posedness issues of the IVP (\ref{5kdvbbm})
for given data  $\eta_0\in {G^{\sigma, s}}$, $s\geq 1$.  We start writing the IVP (\ref{5kdvbbm}) in an equivalent 
integral equation format. Taking the Fourier transform in the first equation in (\ref{5kdvbbm}) with respect to the spatial variable and organizing the terms, we get
\begin{equation}\label{eq1.8}
\Big(1+\gamma_1\xi^2+\delta_1 \xi^4\Big)i\widehat\eta_t =\xi(1-\gamma_2\xi^2+\delta_2\xi^4)\widehat\eta+\frac14(3\xi-4\gamma\xi^3)\widehat{\eta^2}  -\frac18 \xi \widehat{\eta^3} -\frac7{48}\xi \widehat{\eta_x^2}.
\end{equation}

The fourth-order polynomial 
\begin{equation}\label{varphi}
 \varphi(\xi) := 1 + \gamma_1\xi^2+\delta_1\xi^4>0,
\end{equation}
is strictly positive because  $\gamma_1$, and $\delta_1$ are taken to be positive. 
 
Now, we define the  Fourier multiplier operators $\phi(\partial_x)$, $\psi(\partial_x)$ and $\tau(\partial_x)$ as follows
\begin{equation}\label{phi-D}
\widehat{\phi(\partial_x)f}(\xi):=\phi(\xi)\widehat{f}(\xi), \qquad \widehat{\psi(\partial_x)f}(\xi):=\psi(\xi)\widehat{f}(\xi) \;\;\; {\rm and }\;\;\; \widehat{\tau(\partial_x)f}(\xi):=\tau(\xi)\widehat{f}(\xi), 
\end{equation}
 where the symbols are given by
\begin{equation*}
  \phi(\xi)=\frac{\xi(1-\gamma_2\xi^2+\delta_2\xi^4)}{\varphi(\xi)}, \quad \psi(\xi)=\frac{\xi}{\varphi(\xi)} \quad  {\rm and} \quad \tau(\xi)=\frac{3\xi-4\gamma\xi^3}{4\varphi(\xi)}.
\end{equation*}

With this notation, the IVP (\ref{5kdvbbm}) can be written in the form
\begin{equation}\label{eq1.9}
\begin{cases}
i\eta_t = \phi(\partial_x)\eta + \tau (\partial_x)\eta^2 - \frac18\psi(\partial_x)\eta^3  -\frac7{48}\psi(\partial_x)\eta_x^2\, ,\\
 \eta(x,0) = \eta_0(x).
 \end{cases}
\end{equation}
Consider first the following linear IVP associated to \eqref{eq1.9}
\begin{equation}\label{eq1.10}
\begin{cases}
i\eta_t = \phi(\partial_x)\eta,\\
\eta(x,0) = \eta_0(x),
\end{cases}
\end{equation}
whose solution is given  by $\eta(t) = S(t)\eta_0$, where $\widehat{S(t)\eta_0} = e^{-i\phi(\xi)t}\widehat{\eta_0}$ is defined via its Fourier transform.
Clearly, $S(t)$ is a unitary operator on $H^s$ and $G^{\sigma, s}$ for any $s \in \R$, so that
\begin{equation}\label{eq1.11}
\|S(t)\eta_0\|_{H^s} = \|\eta_0\|_{H^s},\qquad \textrm{and}\qquad \|S(t)\eta_0\|_{G^{\sigma, s}} = \|\eta_0\|_{G^{\sigma,s}} 
\end{equation}
for all $t > 0$.
Duhamel's formula allows us to write the IVP  (\ref{eq1.9}) in the equivalent integral equation form,
\begin{equation}\label{eq1.12}
\eta(x,t) = S(t)\eta_0 -i\int_0^tS(t-t')\Big(\tau(\partial_x)\eta^2 - \frac18 \psi(\partial_x)\eta^3 -\frac7{48}\psi (\partial_x)\eta_x^2\Big)(x, t') dt'.
\end{equation}

In what follows, a short-time solution of (\ref{eq1.12}) will be obtained via the contraction mapping principle in the space $C([0,T];G^{\sigma, s})$.  This will provide a 
proof of Theorem \ref{mainTh1}.

\subsubsection{Multilinear Estimates}

 Various multilinear estimates are now established that will be useful in the proof of the local well-posedness result.
  First, we record the  following $G^{\sigma, s}$ version of the  ``sharp" bilinear estimate obtained in \cite{BT}.

\begin{lemma}\label{BT1}
 For $s \ge 0$, there is a constant $C = C_s$ for which
\begin{equation}\label{bt}
\|\omega(\partial_x) (u v)\|_{G^{\sigma, s}} \le C\|u\|_{G^{\sigma,s}}\|v\|_{G^{\sigma,s}}
\end{equation}
 where $\omega(\partial_x)$  is the Fourier multiplier operator 
with symbol
\begin{equation*} \label{btx0}
\omega(\xi) \, = \, \frac{|\xi|}{1 + \xi^2}.
\end{equation*} 
\end{lemma}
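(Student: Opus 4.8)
The plan is to reduce the Gevrey estimate \eqref{bt} to the corresponding $H^s$ estimate (the case $\sigma=0$) already established in \cite{BT}, exploiting the fact that the analytic weight $e^{\sigma\langle\xi\rangle}$ behaves subadditively under convolution. The crucial elementary fact is that, writing $\xi=\xi_1+\xi_2$, one has $\langle\xi\rangle=1+|\xi_1+\xi_2|\le \langle\xi_1\rangle+\langle\xi_2\rangle$, and hence $e^{\sigma\langle\xi\rangle}\le e^{\sigma\langle\xi_1\rangle}\,e^{\sigma\langle\xi_2\rangle}$ for every $\sigma>0$. This is precisely what allows the exponential weight to be ``distributed'' across the two factors of the product $uv$.

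Concretely, I would introduce the auxiliary functions $U,V$ defined through their Fourier transforms by $\widehat{U}(\xi):=e^{\sigma\langle\xi\rangle}|\widehat{u}(\xi)|$ and $\widehat{V}(\xi):=e^{\sigma\langle\xi\rangle}|\widehat{v}(\xi)|$. By construction $\widehat U,\widehat V\ge 0$, and from the definition \eqref{def-G1} one has the isometric identities $\|U\|_{H^s}=\|u\|_{G^{\sigma,s}}$ and $\|V\|_{H^s}=\|v\|_{G^{\sigma,s}}$. Since $\widehat{uv}(\xi)$ equals, up to the constant $1/\sqrt{2\pi}$, the convolution $\widehat u*\widehat v$, the subadditivity above yields the pointwise domination
$$
e^{\sigma\langle\xi\rangle}\,|\widehat{uv}(\xi)|\le \frac{1}{\sqrt{2\pi}}\int_{\R} e^{\sigma\langle\xi_1\rangle}|\widehat u(\xi_1)|\,e^{\sigma\langle\xi-\xi_1\rangle}|\widehat v(\xi-\xi_1)|\,d\xi_1=\widehat{UV}(\xi),
$$
the right-hand side being nonnegative because $\widehat U,\widehat V\ge 0$.

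With this domination in hand the rest is immediate: since $\omega(\xi)\ge 0$ and $\langle\xi\rangle^{s}\ge 0$, squaring and integrating against $\langle\xi\rangle^{2s}\omega(\xi)^2$ gives
$$
\|\omega(\partial_x)(uv)\|_{G^{\sigma,s}}^2=\int_{\R}\langle\xi\rangle^{2s}\omega(\xi)^2\big(e^{\sigma\langle\xi\rangle}|\widehat{uv}(\xi)|\big)^2\,d\xi\le \int_{\R}\langle\xi\rangle^{2s}\omega(\xi)^2|\widehat{UV}(\xi)|^2\,d\xi=\|\omega(\partial_x)(UV)\|_{H^s}^2.
$$
Applying the sharp bilinear estimate of \cite{BT} to the pair $U,V$ bounds the last quantity by $C\|U\|_{H^s}\|V\|_{H^s}=C\|u\|_{G^{\sigma,s}}\|v\|_{G^{\sigma,s}}$, which is exactly \eqref{bt}.

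I do not expect a serious analytic obstacle here: the entire content of Lemma \ref{BT1} beyond \cite{BT} is this weight-transference device, and the symbol $\omega$ and the Sobolev weight $\langle\xi\rangle^{s}$ are untouched by it, so the \cite{BT} estimate applies verbatim to $U,V$. The one point that must be checked with care is the pointwise inequality, namely that replacing $\widehat u,\widehat v$ by their moduli and splitting $e^{\sigma\langle\xi\rangle}$ via subadditivity produces a genuine majorant. The positivity of $\widehat U$ and $\widehat V$ is essential here, since it guarantees that passing from $|\widehat{uv}|$ to the convolution of the moduli loses no cancellation and keeps the $H^s$-norm controlled by the original Gevrey norms.
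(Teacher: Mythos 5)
Your proof is correct, and while it rests on the same key inequality as the paper's, it is organized as a genuinely different argument: a transference principle rather than a direct estimate. You define majorants $U,V$ through $\widehat U=e^{\sigma\langle\cdot\rangle}|\widehat u|$, $\widehat V=e^{\sigma\langle\cdot\rangle}|\widehat v|$, use the subadditivity $\langle\xi\rangle\le\langle\xi-\xi_1\rangle+\langle\xi_1\rangle$ to get the pointwise domination $e^{\sigma\langle\xi\rangle}|\widehat{uv}(\xi)|\le\widehat{UV}(\xi)$, and then quote the $H^s$ bilinear estimate of \cite{BT} as a black box applied to $U,V$. The paper instead reproves the estimate from scratch in the Gevrey setting: it distributes \emph{both} weights inside the convolution, $\langle\xi\rangle^s\le\langle\xi-\xi_1\rangle^s\langle\xi_1\rangle^s$ together with the exponential, applies Cauchy--Schwarz in $\xi_1$, and closes with the integrability of $(1+\xi^2)^{-1}$, which is where the constant $C_s$ arises explicitly (its displayed intermediate step \eqref{bil-m2} carries doubled exponents inside an already-squared integral, an evident typo that your bookkeeping avoids). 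What your route buys is modularity: since the substitution $u\mapsto U$ leaves the symbol $\omega$ and the Sobolev weight untouched and is isometric, $\|U\|_{H^s}=\|u\|_{G^{\sigma,s}}$, it transfers \emph{any} bilinear Fourier-multiplier bound from $H^s$ to $G^{\sigma,s}$ verbatim, with the same constant; the paper's computation buys self-containedness, not relying on the statement in \cite{BT} (whose proof is in any case the $\sigma=0$ instance of the same Cauchy--Schwarz argument). Two small points you should record to make the reduction airtight: $U$ and $V$ need not be real-valued even when $u,v$ are (they are when $u,v$ are real, since $\widehat U$ is then even and nonnegative), so one should note that the estimate of \cite{BT} holds for complex-valued $H^s$ functions, as its proof is an $L^2$ convolution bound on the Fourier side insensitive to phases; and your identity $\widehat{UV}=\tfrac{1}{\sqrt{2\pi}}\,\widehat U*\widehat V$ depends on the paper's normalization of the Fourier transform, which you have used consistently.
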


\begin{proof}
Using  the definition of the $G^{\sigma, s}$-norm from \eqref{def-G2}, one can obtain
\begin{equation}\label{bil-m1}
\begin{split}
\|\omega(\partial_x) (uv)\|_{G^{\sigma, s}} ^2&= \|\langle\xi\rangle^s e^{\sigma\langle\xi\rangle}\omega(\xi)\widehat{u}*\widehat{v}(\xi)\|_{L^2}^2\\
&=\int_{\R}\langle\xi\rangle^{2s} e^{2\sigma\langle\xi\rangle}\frac{\xi^2}{(1+\xi^2)^2}\Big(\int_{\R}\widehat{u}(\xi-\xi_1)\widehat{v}(\xi_1)d\xi_1\Big)^2d\xi.
\end{split}
\end{equation}

Note that, for $s\geq 0$ one has $\langle\xi\rangle^s\leq \langle\xi-\xi_1\rangle^s\langle\xi_1\rangle^s$ and also $e^{\sigma\langle\xi\rangle}\leq e^{\sigma\langle\xi-\xi_1\rangle}e^{\sigma\langle\xi_1\rangle}$. Using these inequalities, the estimate \eqref{bil-m1} yields
\begin{equation}\label{bil-m2}
\begin{split}
\|\omega(\partial_x) (uv)\|_{G^{\sigma, s}} ^2&\leq
\int_{\R}\frac{\xi^2}{(1+\xi^2)^2}\Big(\int_{\R}\langle\xi-\xi_1\rangle^{2s} e^{2\sigma\langle\xi-\xi_1\rangle}\widehat{u}(\xi-\xi_1)\langle\xi_1\rangle^{2s} e^{2\sigma\langle\xi_1\rangle}\widehat{v}(\xi_1)d\xi_1\Big)^2d\xi.
\end{split}
\end{equation}

Now, using $\frac{\xi^2}{(1+\xi^2)^2}\leq\frac1{1+\xi^2}$, the Cauchy-Schwartz inequality and the definition of the $G^{\sigma, s}$-norm, we obtain from \eqref{bil-m2} that
\begin{equation}\label{bil-m3}
\begin{split}
\|\omega(\partial_x) (uv)\|_{G^{\sigma, s}} ^2&\leq
\int_{\R}\frac{1}{1+\xi^2}d\xi \:\;\|u\|_{G^{\sigma, s}}^2\|v\|_{G^{\sigma, s}}^2\\
&\leq C\|u\|_{G^{\sigma, s}}^2\|v\|_{G^{\sigma, s}}^2,
\end{split}
\end{equation}
and this completes the proof.
\end{proof}

It is worth noting that the counterexample in \cite{BT} can be adapted to  show that the inequality (\ref{bt}) fails for  $s<0$.

\begin{lemma}\label{Lema1}
For any $s \ge 0$ and $\sigma>0$, there is a constant $C = C_s$ such that the inequality
\begin{equation}\label{bilin-1}
\|\tau(\partial_x) \eta^2\|_{G^{\sigma, s}} \le C \| \eta\|_{G^{\sigma, s}} ^2
\end{equation}
holds, where the operator $\tau(\partial_x)$  is  defined in (\ref{phi-D}).
\end{lemma}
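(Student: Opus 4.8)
The plan is to reduce the claimed bilinear estimate to the already-proved ``sharp'' estimate of Lemma \ref{BT1} by comparing the symbol $\tau(\xi)$ with the symbol $\omega(\xi)=|\xi|/(1+\xi^2)$. The crucial observation is that, since both $\gamma_1$ and $\delta_1$ are positive, the rational symbol
$$\tau(\xi)=\frac{\xi(3-4\gamma\xi^2)}{4\varphi(\xi)}=\frac{\xi(3-4\gamma\xi^2)}{4(1+\gamma_1\xi^2+\delta_1\xi^4)}$$
decays like $|\xi|^{-1}$ at infinity and vanishes linearly at the origin, exactly matching the behaviour of $\omega(\xi)$. Hence I expect a pointwise domination $|\tau(\xi)|\le C\,\omega(\xi)$ to hold for all $\xi\in\R$, with a constant $C$ depending only on $\gamma,\gamma_1,\delta_1$.

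First I would establish this pointwise inequality. Factoring the common factor $\xi$ and cancelling it against the $|\xi|$ in $\omega$, the bound $|\tau(\xi)|\le C\omega(\xi)$ is equivalent (for $\xi\neq0$, the case $\xi=0$ being trivial) to
$$|3-4\gamma\xi^2|\,(1+\xi^2)\le 4C\,(1+\gamma_1\xi^2+\delta_1\xi^4).$$
Writing $u=\xi^2\ge0$ and using $|3-4\gamma u|\le 3+4|\gamma|u$, the left-hand side is a quadratic in $u$ with nonnegative coefficients, while the right-hand side is $4C(1+\gamma_1 u+\delta_1 u^2)$ with $\gamma_1,\delta_1>0$; matching the coefficients of $1,u,u^2$ shows that any
$$C\ge\max\Big\{\tfrac34,\ \tfrac{3+4|\gamma|}{4\gamma_1},\ \tfrac{|\gamma|}{\delta_1}\Big\}$$
works. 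This coefficient comparison is the only genuine computation in the argument, and it is where the positivity hypotheses on $\gamma_1$ and $\delta_1$ enter decisively.

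With the symbol bound in hand, the estimate follows from the weighted-$L^2$ structure of the Gevrey norm. Indeed, writing the $G^{\sigma,s}$ norm as in \eqref{def-G1}, for any $g$ one has
$$\|\tau(\partial_x)g\|_{G^{\sigma,s}}^2=\int_{\R}\langle\xi\rangle^{2s}e^{2\sigma\langle\xi\rangle}|\tau(\xi)|^2|\widehat g(\xi)|^2\,d\xi\le C^2\int_{\R}\langle\xi\rangle^{2s}e^{2\sigma\langle\xi\rangle}\omega(\xi)^2|\widehat g(\xi)|^2\,d\xi=C^2\|\omega(\partial_x)g\|_{G^{\sigma,s}}^2,$$
so that $\|\tau(\partial_x)g\|_{G^{\sigma,s}}\le C\|\omega(\partial_x)g\|_{G^{\sigma,s}}$. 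Taking $g=\eta^2$ and invoking Lemma \ref{BT1} then yields $\|\tau(\partial_x)\eta^2\|_{G^{\sigma,s}}\le C\|\omega(\partial_x)\eta^2\|_{G^{\sigma,s}}\le C\|\eta\|_{G^{\sigma,s}}^2$, which is exactly \eqref{bilin-1}. The main obstacle is therefore purely the rational-function estimate of the first step; once the pointwise symbol domination is secured, the bilinear estimate is inherited from Lemma \ref{BT1} with no further work.
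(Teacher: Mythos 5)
Your proposal is correct and follows essentially the same route as the paper: a pointwise symbol domination $|\tau(\xi)|\le C\,\omega(\xi)$ (which the paper merely asserts as easy to verify from $\delta_1>0$, and which you verify explicitly by the coefficient comparison in $u=\xi^2$), followed by an application of Lemma \ref{BT1} with $u=v=\eta$. The only difference is that you spell out the constant $C\ge\max\{\tfrac34,\tfrac{3+4|\gamma|}{4\gamma_1},\tfrac{|\gamma|}{\delta_1}\}$, which fills in the detail the paper leaves to the reader.
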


\begin{proof}
Since  $\delta_1>0$, one can easily verify that  $|\tau(\xi)| \leq C \omega(\xi)$ for some constant $C>0$.  Using this fact and the definition of the $G^{\sigma, s}$-norm and the estimate \eqref{bt} from Lemma \ref{BT1}, one can obtain
\begin{equation*}
\begin{split}
\|\tau(\partial_x) \eta^2\|_{G^{\sigma, s}} &\leq \|\langle\xi\rangle^s e^{\sigma\langle\xi\rangle}\tau(\xi)\widehat{\eta}*\widehat{\eta}(\xi)\|_{L^2}\\
&\leq \|\langle\xi\rangle^s e^{\sigma\langle\xi\rangle}\omega(\xi)\widehat{\eta}*\widehat{\eta}(\xi)\|_{L^2}^2\\
&\leq C\|\eta\|_{G^{\sigma, s}}^2,
\end{split}
\end{equation*}
as required.
\end{proof}

\begin{lemma}\label{P1}
For $s \ge \frac16$, there is a constant $C = C_s$ such that 
\begin{equation}\label{trilin-1}
\|\psi(\partial_x) \eta^3\|_{G^{\sigma, s}} \le C \| \eta\|_{G^{\sigma, s}} ^3.
\end{equation}
\end{lemma}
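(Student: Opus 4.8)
The plan is to reduce the Gevrey trilinear estimate to a purely Sobolev one and then exploit the strong smoothing carried by $\psi$. Since $\varphi(\xi)=1+\gamma_1\xi^2+\delta_1\xi^4\ge c\,(1+\xi^2)^2$ for some $c>0$ (recall $\gamma_1,\delta_1>0$), the symbol obeys $|\psi(\xi)|=\frac{|\xi|}{\varphi(\xi)}\le C\langle\xi\rangle^{-3}$; that is, $\psi(\partial_x)$ gains three derivatives at high frequency. First I would record, exactly as in the proof of Lemma \ref{BT1}, the elementary inequalities $\langle\xi\rangle^{s}\le C\langle\xi_1\rangle^{s}\langle\xi_2\rangle^{s}\langle\xi_3\rangle^{s}$ (valid for $s\ge0$) and $e^{\sigma\langle\xi\rangle}\le e^{\sigma\langle\xi_1\rangle}e^{\sigma\langle\xi_2\rangle}e^{\sigma\langle\xi_3\rangle}$ whenever $\xi=\xi_1+\xi_2+\xi_3$, both following from $\langle\xi\rangle\le\langle\xi_1\rangle+\langle\xi_2\rangle+\langle\xi_3\rangle$.

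Writing $G(\xi):=\langle\xi\rangle^{s}e^{\sigma\langle\xi\rangle}|\widehat\eta(\xi)|$, so that $\|G\|_{L^2}=\|\eta\|_{G^{\sigma,s}}$, and using $|\widehat{\eta^3}|\le |\widehat\eta|*|\widehat\eta|*|\widehat\eta|$ together with the two inequalities above, the exponential weights cancel and one is left with
\begin{equation*}
\|\psi(\partial_x)\eta^3\|_{G^{\sigma,s}}^2\le C\int_{\R}\langle\xi\rangle^{2s}\psi(\xi)^2\Big(\int_{\R}\int_{\R}\prod_{i=1}^3\langle\xi_i\rangle^{-s}G(\xi_i)\,d\xi_1\,d\xi_2\Big)^2 d\xi,
\end{equation*}
with $\xi_3=\xi-\xi_1-\xi_2$. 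Introducing $w$ through $\widehat w=\langle\cdot\rangle^{-s}G\ge0$, so that $\|w\|_{H^s}=\|G\|_{L^2}=\|\eta\|_{G^{\sigma,s}}$ and the inner double integral equals $\widehat{w^3}(\xi)$, the right-hand side is, up to a constant, $\|\psi(\partial_x)w^3\|_{H^s}^2$. Thus the Gevrey estimate \eqref{trilin-1} follows once the Sobolev trilinear estimate $\|\psi(\partial_x)w^3\|_{H^s}\le C\|w\|_{H^s}^3$ is established; this is the same reduction mechanism that rendered Lemma \ref{BT1} a statement purely about the multiplier $\omega$.

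It remains to prove the Sobolev estimate. By the three–derivative gain, $\|\psi(\partial_x)w^3\|_{H^s}\le C\|w^3\|_{H^{s-3}}$, so I would show $\|w^3\|_{H^{s-3}}\le C\|w\|_{H^s}^3$. For $\tfrac16\le s\le\tfrac12$ I would argue by duality: $\|w^3\|_{H^{s-3}}=\sup_{\|\phi\|_{H^{3-s}}=1}\big|\int_{\R}w^3\phi\,dx\big|\le\|w\|_{L^3}^3\,\|\phi\|_{L^\infty}$ by H\"older, then bound $\|\phi\|_{L^\infty}\le C\|\phi\|_{H^{3-s}}$ (legitimate since $3-s\ge\tfrac52>\tfrac12$) and $\|w\|_{L^3}\le C\|w\|_{H^{1/6}}\le C\|w\|_{H^s}$. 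For $s>\tfrac12$ the space $H^s(\R)$ is an algebra, so $\|w^3\|_{H^{s-3}}\le\|w^3\|_{H^s}\le C\|w\|_{H^s}^3$. The two ranges overlap and cover all $s\ge\tfrac16$. The essential point, and the source of the threshold, is the Sobolev embedding $H^{1/6}(\R)\hookrightarrow L^3(\R)$: it is precisely the borderline exponent for which $\|w\|_{L^3}$ is controlled by $\|w\|_{H^s}$, and below $s=\tfrac16$ this $L^3$ control, hence the whole estimate, breaks down. I expect this embedding step to be the crux, the reduction to Sobolev and the smoothing bound on $\psi$ being routine.
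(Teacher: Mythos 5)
Your proof is correct, and it reaches the lemma by a genuinely different organization than the paper, though the two arguments share the same core ingredients: exponential submultiplicativity $e^{\sigma\langle\xi\rangle}\le e^{\sigma\langle\xi_1\rangle}e^{\sigma\langle\xi_2\rangle}e^{\sigma\langle\xi_3\rangle}$ strips the Gevrey weight (the paper's auxiliary function $\widehat f:=e^{\sigma\langle\cdot\rangle}\widehat\eta$ is your $\widehat w$ up to the $\langle\cdot\rangle^{-s}$ bookkeeping), the decay $|\psi(\xi)|\le C\langle\xi\rangle^{-3}$ supplies the smoothing, the borderline embedding $H^{1/6}(\R)\hookrightarrow L^3(\R)$ produces the threshold $s\ge\frac16$, and both arguments split into a low-$s$ and a high-$s$ case closed by an algebra property. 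The differences are these. You perform a complete reduction to the Sobolev statement $\|\psi(\partial_x)w^3\|_{H^s}\le C\|w\|_{H^s}^3$ and then argue in the low range by $H^{s-3}$--$H^{3-s}$ duality on the physical side; the paper instead stays in Gevrey norms throughout and, for $\frac16\le s<\frac52$, estimates directly on the Fourier side by Cauchy--Schwarz, $\|\langle\xi\rangle^{s-3}e^{\sigma\langle\xi\rangle}\widehat{\eta^3}\|_{L^2}\le\|\langle\xi\rangle^{-(3-s)}\|_{L^2}\,\|\widehat{f^3}\|_{L^\infty}\le C\|f\|_{L^3}^3$, which is essentially the dual formulation of your step and is valid on the wider range $s<\frac52$ (your duality argument also works there; your cutoff at $s=\frac12$ is merely a choice of crossover point). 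In the high range you switch at $s>\frac12$ to the $H^s$ algebra via the trivial inclusion $\|w^3\|_{H^{s-3}}\le\|w^3\|_{H^s}$, while the paper switches at $s\ge\frac52$ and uses the Banach algebra property of $G^{\sigma,s}$ together with the bilinear Lemma \ref{BT1}. What each buys: your route is more modular, isolating the purely Sobolev content and dispensing with Lemma \ref{BT1} and the Gevrey algebra in this lemma; the paper's route reuses machinery it has already established and avoids the duality detour. Two cosmetic remarks: the inequality $\langle\xi\rangle^{s}\le C\langle\xi_1\rangle^{s}\langle\xi_2\rangle^{s}\langle\xi_3\rangle^{s}$ that you record is never actually used in your displayed reduction (the factor $\langle\xi\rangle^{2s}$ remains outside and is absorbed into the $H^s$ norm of $\psi(\partial_x)w^3$), and your two ranges abut at $s=\frac12$ rather than overlap --- harmless, since their union still covers all $s\ge\frac16$.
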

\begin{proof}
Consider first when $\frac16 \le s < \frac52$.   In this case, it appears that 
\begin{equation*}\label{x2}
\Big|(1+|\xi|)^s \,\psi(\xi)\Big|=\Big|\frac{ (1+|\xi|)^s \xi}{(1 +\gamma_1 \xi^2+\delta_1\xi^4)}\Big|
 \le C \frac{1}{(1+|\xi|)^{3-s}}.
\end{equation*}

 The last inequality implies that
\begin{equation}\label{x11}
\begin{split}
\|\psi(\partial_x) \eta^3\|_{G^{\sigma, s}} &= \|(1+|\xi|)^s \,\psi(\xi)e^{\sigma\langle\xi\rangle}\widehat{\eta^3}(\xi)\|_{L^2}\\
& \le C\left\|\frac{1}{(1+|\xi|)^{3-s}} e^{\sigma\langle\xi\rangle}\widehat{\eta^3}(\xi)\right\|_{L^2}\\ 
&\leq C \left\|\frac{1}{(1+|\xi|)^{3-s}}\right\|_{L^2}\| e^{\sigma\langle\xi\rangle}\widehat{\eta^3}(\xi)\|_{L^{\infty}}.
\end{split}
\end{equation}

Let $\widehat{f}(\xi):=e^{\sigma\langle\xi\rangle}\widehat{\eta}(\xi)$. Then using  $e^{\sigma\langle\xi\rangle}\leq e^{\sigma\langle\xi-\xi_1-\xi_2\rangle}e^{\sigma\langle\xi_1\rangle}e^{\sigma\langle\xi_2\rangle}$, we get
\begin{equation}\label{n-33}
e^{\sigma\langle\xi\rangle}\widehat{\eta^3}(\xi)\leq \int_{\R}e^{\sigma|\langle\xi-\xi_1-\xi_2\rangle}\widehat{\eta}(\xi-\xi_1-\xi_2)e^{\sigma\langle\xi_1\rangle}\widehat{\eta}(\xi_1)e^{\sigma\langle\xi_2\rangle} \widehat{\eta}(\xi_2)d\xi_1d\xi_2 = \widehat{f^3}(\xi).
\end{equation}

Using \eqref{n-33} and the fact that $\left\|\frac{1}{(1+|\xi|)^{3-s}}\right\|_{L^2}$ is bounded for $s<\frac52$, we obtain from \eqref{x11} that
\begin{equation}\label{x11-m}
\begin{split}
\|\psi(\partial_x) \eta^3\|_{G^{\sigma, s}} &\leq \| \widehat{f^3}(\xi)\|_{L^{\infty}} \leq C\|f\|_{L^3}^3.
\end{split}
\end{equation}

 From one dimensional Sobolev embedding, we have 
\begin{equation}\label{x3}
\|f\|_{L^{3}} \le C \|f\|_{H^\frac16} = C\|\eta\|_{G^{{\sigma},\frac16}}.
\end{equation}

Therefore, for   $\frac16 \leq s < \frac52$, we obtain from \eqref{x11-m} and \eqref{x3} that
\begin{equation}
\|\psi(\partial_x) \eta^3\|_{G^{\sigma, s}} \le  C\| \eta\|_{G^{\sigma, s}} ^3.
\end{equation}

For $s\geq \frac52$,  we observe that  $G^{\sigma,s} $ is a Banach algebra.  Also, note that $|\psi(\xi)| \leq C\frac{|\xi|}{1+\xi^2}$. So, using the same procedure as in Lemma \ref{Lema1}, we obtain
\begin{equation*}
\|\psi(\partial_x) (\eta \eta^2)\|_{G^{\sigma, s}} \le C\|\eta \|_{G^{\sigma, s}} \|\eta^2
\|_{G^{\sigma,s}}\le C\|\eta \|_{G^{\sigma, s}}^3,
\end{equation*}
as desired.
\end{proof}

\begin{lemma}\label{lema2.5}
For $s \ge 1$, the inequality 
\begin{equation}\label{Sharp1}
\|\psi(\partial_x) \eta_x^2\|_{G^{\sigma,s}} \le C \| \eta\|_{G^{\sigma,s}} ^2
\end{equation}
holds.
\end{lemma}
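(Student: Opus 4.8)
The plan is to reduce the estimate to the weighted bilinear convolution bound that already underlies Lemma \ref{BT1}, after transferring all of the Gevrey weight onto the two input factors. Writing $\eta_x^2=(\partial_x\eta)^2$ and using the convolution theorem together with $\widehat{\partial_x\eta}(\xi)=i\xi\widehat\eta(\xi)$, one has $|\widehat{\eta_x^2}(\xi)|\le C\int_\R|\xi_1|\,|\xi-\xi_1|\,|\widehat\eta(\xi_1)|\,|\widehat\eta(\xi-\xi_1)|\,d\xi_1$. Inserting this into the definition \eqref{def-G1} of the $G^{\sigma,s}$-norm and applying the subadditivity inequalities $\langle\xi\rangle^s\le\langle\xi_1\rangle^s\langle\xi-\xi_1\rangle^s$ (valid for $s\ge0$) and $e^{\sigma\langle\xi\rangle}\le e^{\sigma\langle\xi_1\rangle}e^{\sigma\langle\xi-\xi_1\rangle}$, I would move the weights inside the convolution and pass to the nonnegative function $\widehat g(\xi):=\langle\xi\rangle^s e^{\sigma\langle\xi\rangle}|\widehat\eta(\xi)|$, which satisfies $\|\widehat g\|_{L^2}=\|\eta\|_{G^{\sigma,s}}$. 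This recasts the problem as
\[
\|\psi(\partial_x)\eta_x^2\|_{G^{\sigma,s}}^2\le C\int_\R\Big(\int_\R h(\xi,\xi_1)\,\widehat g(\xi_1)\,\widehat g(\xi-\xi_1)\,d\xi_1\Big)^2\,d\xi,\qquad h(\xi,\xi_1)=\psi(\xi)\,\langle\xi\rangle^{s}\,\frac{|\xi_1|}{\langle\xi_1\rangle^{s}}\,\frac{|\xi-\xi_1|}{\langle\xi-\xi_1\rangle^{s}}.
\]

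The crux of the proof is a pointwise factorization of this kernel. Since $\gamma_1,\delta_1>0$ one has $\varphi(\xi)\ge c(1+\xi^2)^2$, hence $|\psi(\xi)|\,\langle\xi\rangle^{s}\le C\,|\xi|\,\langle\xi\rangle^{s-4}$; comparing with $\omega(\xi)=|\xi|/(1+\xi^2)$, which satisfies $\omega(\xi)\ge|\xi|\langle\xi\rangle^{-2}$, I would write $h=\omega(\xi)\,\widetilde h(\xi,\xi_1)$ with
\[
\widetilde h(\xi,\xi_1)\le C\,\langle\xi\rangle^{s-2}\,\frac{|\xi_1|}{\langle\xi_1\rangle^{s}}\,\frac{|\xi-\xi_1|}{\langle\xi-\xi_1\rangle^{s}}.
\]
The goal is then to dominate $\widetilde h$ by a product $m(\xi_1)\,m(\xi-\xi_1)$ of a bounded multiplier. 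For $1\le s\le2$ one discards $\langle\xi\rangle^{s-2}\le1$ and uses $|\zeta|/\langle\zeta\rangle^{s}\le\langle\zeta\rangle^{1-s}\le1$; for $s>2$ one instead distributes $\langle\xi\rangle^{s-2}\le\langle\xi_1\rangle^{s-2}\langle\xi-\xi_1\rangle^{s-2}$ and is left with $|\zeta|/\langle\zeta\rangle^{2}\le1$. In either case $\widetilde h(\xi,\xi_1)\le C\,m(\xi_1)\,m(\xi-\xi_1)$ with $m$ uniformly bounded. \emph{This step is the main obstacle}: in the high--high-to-low interaction $|\xi_1|\sim|\xi-\xi_1|\gg|\xi|$ the two derivatives produce a factor of size $|\xi_1|^2$ while the output smoothing $\psi(\xi)$ (and $\omega(\xi)$) is small, so the extra decay of $\psi$ over $\omega$ cannot tame the derivatives directly. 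It is precisely the hypothesis $s\ge1$ that allows the \emph{input} weights $\langle\xi_1\rangle^{s},\langle\xi-\xi_1\rangle^{s}$ to absorb one derivative each, after which the two remaining orders of smoothing of $\psi$ match the order of $\omega$.

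With the factorization in hand, I would conclude by the same Cauchy--Schwarz argument used in the proof of Lemma \ref{BT1}. Setting $\widehat{G}(\zeta):=m(\zeta)\,\widehat g(\zeta)$ and pulling $\omega(\xi)$ out of the inner integral, the right-hand side above is controlled by $C\big\|\omega(\xi)\,(\widehat G*\widehat G)(\xi)\big\|_{L^2_\xi}^2$. Cauchy--Schwarz in $\xi_1$ gives the pointwise bound $(\widehat G*\widehat G)(\xi)\le\|\widehat G\|_{L^2}^2$, and since $\int_\R\omega(\xi)^2\,d\xi=\int_\R\xi^2(1+\xi^2)^{-2}\,d\xi<\infty$, I obtain $\|\psi(\partial_x)\eta_x^2\|_{G^{\sigma,s}}\le C\|\widehat G\|_{L^2}^2$. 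Finally $\|\widehat G\|_{L^2}\le\|m\|_{L^\infty}\|\widehat g\|_{L^2}=C\|\eta\|_{G^{\sigma,s}}$, which yields \eqref{Sharp1}.
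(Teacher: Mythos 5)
Your argument is correct and is in substance the same as the paper's proof: both hinge on comparing $\psi$ with the BBM multiplier $\omega$, on the hypothesis $s\ge 1$ letting each factor's weight $\langle\cdot\rangle^{s}$ absorb one derivative (your bound $|\zeta|/\langle\zeta\rangle^{s}\le\langle\zeta\rangle^{1-s}\le 1$), and on the Cauchy--Schwarz/$\int_{\R}\omega(\xi)^2\,d\xi<\infty$ mechanism underlying Lemma \ref{BT1}. The paper merely packages this more economically: from the pointwise bound $\psi(\xi)\le C\,\omega(\xi)\,(1+|\xi|)^{-1}$ it deduces $\|\psi(\partial_x)\eta_x^2\|_{G^{\sigma,s}}\le C\|\omega(\partial_x)\eta_x^2\|_{G^{\sigma,s-1}}$ and then invokes the bilinear estimate \eqref{bt} at regularity $s-1\ge 0$ applied to the product $\eta_x\cdot\eta_x$, together with $\|\eta_x\|_{G^{\sigma,s-1}}\le\|\eta\|_{G^{\sigma,s}}$ --- an index shift that treats all $s\ge 1$ uniformly and renders your explicit kernel factorization and case split at $s=2$ unnecessary, since that portion of your argument is in effect a re-proof of Lemma \ref{BT1} inline.
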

\begin{proof}
Observe that
$$
\psi(\xi) \leq C\omega(\xi) \frac1{1+ |\xi|}.
$$
The inequality (\ref{bt})  then allows the conclusion 
\begin{equation*}\label{x6}
\|\psi(\partial_x) \eta_x^2\|_{G^{\sigma,s}}  \leq C\| \omega(\partial_x) \eta_x^2\|_{G^{\sigma,s-1}} \le  C\|  \eta_x\|_{G^{\sigma,s-1}}\|  \eta_x\|_{G^{\sigma,s-1}}
\le  C\|  \eta\|_{G^{\sigma,s}}^2,
\end{equation*}
since  $s-1 \ge 0$.
\end{proof}

In what follows, we use the estimates derived above  to provide a proof of the local well-posedness result in the $G^{\sigma,s}(\R)$ space whenever $s\geq 1$.
\begin{proof}[Proof of Theorem \ref{mainTh1}]
 Taking into account of the Duhamel's formula \eqref{eq1.12}, we define a mapping
\begin{equation}\label{eq3.42}
\Psi\eta(x,t) = S(t)\eta_0 -i\int_0^tS(t-t')\Big(\tau(D_x)\eta^2 - \frac14 \psi(\partial_x)\eta^3
 -\frac7{48} \psi(\partial_x)\eta_x^2\Big)(x, t') dt'.
\end{equation}
We show that the mapping $\Psi$ is a contraction on a closed ball $\mathcal{B}_r$ with radius $r > 0$ and center at the origin in $C([0,T];G^{\sigma,s})$.

From  \eqref{eq1.11}, we know that $S(t)$ is a unitary group in $G^{\sigma,s}(\R)$. Using this fact, we obtain
\begin{equation}\label{eq3.43}
\|\Psi\eta\|_{G^{\sigma,s}} \leq \|\eta_0\|_{G^{\sigma,s}} +CT\Big[\big{\|}\tau(\partial_x)\eta^2 - \frac18 \psi(\partial_x)\eta^3
 -\frac7{48}\psi(\partial_x)\eta_x^2\big{\|}_{C([0,T];G^{\sigma,s})}\Big].
\end{equation}
In view of the  inequalities (\ref{bilin-1}), (\ref{trilin-1}) and (\ref{Sharp1}), we obtain from \eqref{eq3.43} that
\begin{equation}\label{eq3.44}
\|\Psi\eta\|_{G^{\sigma,s}} \leq \|\eta_0\|_{G^{\sigma,s}} +CT\Big[\big{\|}\eta\big{\|}_{C([0,T];G^{\sigma,s})}^2 + \big{\|}\eta\big{\|}_{C([0,T];G^{\sigma,s})}^3 +\big{\|}\eta\big{\|}_{C([0,T];G^{\sigma,s})}^2\Big].
\end{equation}

Now, consider  $\eta\in \mathcal{B}_r$, then (\ref{eq3.44}) yields
\begin{equation*}\label{eq3.45}
\|\Psi\eta\|_{G^{\sigma,s}} \leq \|\eta_0\|_{G^{\sigma,s}} +CT\big[2r +r^2 \big]r.
\end{equation*}

If we choose $r= 2\|\eta_0\|_{H^s}$ and $T= \frac1{2Cr(2 + r) }$, then  $\|\Psi\eta\|_{G^{\sigma,s}} \leq r$, showing that $\Psi$ maps
 the closed ball $\mathcal{B}_r$ in $C([0,T];G^{\sigma,s})$ onto itself.   With the same choice of $r$ and $T$ and the same sort of 
estimates, one can easily show that $\Psi$ is a contraction on $\mathcal{B}_r$ with contraction constant equal to $\frac12$ as it happens. The rest of
the proof is standard so we omit the details.
\end{proof}

\begin{remark}\label{rm2.1}
The following points follow immediately from the proof of the Theorem \ref{mainTh1}:
\begin{enumerate}
\item The maximal existence time $T_s$ of the solution satisfies
\begin{equation}\label{r2.45}
T_s\geq \bar{T} = \frac1{8C_s\|\eta_0\|_{G^{\sigma,s}}(1+\|\eta_0\|_{G^{\sigma,s}})},
\end{equation}
where the constant $C_s$ depends only on $s$.
\item The solution cannot grow too much on the interval $[0,\bar T]$ since
\begin{equation}\label{r2.46}
\|\eta(\cdot,t)\|_{G^{\sigma,s}} \leq r =  2\|\eta_0\|_{G^{\sigma,s}}
\end{equation}
for  $t $ in this interval,  where $\bar{T}$ is as above in (\ref{r2.45}).
\end{enumerate}
\end{remark}


\setcounter{equation}{0}
\section{Evolution of Radius of Analyticity}\label{sec-3}
In this section we study the evolution of the radius of analyticity $\sigma(t)$ as $t$ grows. 
\begin{lemma}\label{lem31}
Let $r,s$ and $\sigma$ be non-negative numbers. Then there are absolute constants $c_1$ and $c_2$ such that for $u \in D(J^{r+s}e^{\sigma J})$,
$$
\|J^{s, \sigma}u\| \leq c_1\|J^{s}u\|+c_2\sigma^r \|J^{s+r, \sigma}u\|.
$$
\end{lemma}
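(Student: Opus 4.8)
The plan is to reduce everything to a pointwise inequality between the Fourier multipliers and then invoke Plancherel together with the triangle inequality in $L^2$. Writing the three quantities in Fourier variables via \eqref{def-G1}, we have $\|J^{s,\sigma}u\|^2 = \int_\R \langle\xi\rangle^{2s}e^{2\sigma\langle\xi\rangle}|\hat u(\xi)|^2\,d\xi$, $\|J^s u\|^2 = \int_\R \langle\xi\rangle^{2s}|\hat u(\xi)|^2\,d\xi$ and $\|J^{s+r,\sigma}u\|^2 = \int_\R \langle\xi\rangle^{2(s+r)}e^{2\sigma\langle\xi\rangle}|\hat u(\xi)|^2\,d\xi$. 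So it suffices to dominate the symbol $\langle\xi\rangle^s e^{\sigma\langle\xi\rangle}$ by a sum of the symbols $\langle\xi\rangle^s$ and $\sigma^r\langle\xi\rangle^{s+r}e^{\sigma\langle\xi\rangle}$ with constants that are independent of $\xi$, $\sigma$, $r$ and $s$.

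The key step is the elementary pointwise bound
\[
\langle\xi\rangle^s e^{\sigma\langle\xi\rangle}\le c_1\langle\xi\rangle^s + c_2\,\sigma^r\langle\xi\rangle^{s+r}e^{\sigma\langle\xi\rangle},\qquad \xi\in\R,
\]
which I would establish by splitting the frequency axis at the threshold $\sigma\langle\xi\rangle = 1$. When $\sigma\langle\xi\rangle\le 1$ the exponential factor is harmless, $e^{\sigma\langle\xi\rangle}\le e$, so the left side is controlled by $e\langle\xi\rangle^s$, i.e.\ by the first term. When $\sigma\langle\xi\rangle\ge 1$ we have $(\sigma\langle\xi\rangle)^r\ge 1$, hence $\langle\xi\rangle^s e^{\sigma\langle\xi\rangle}\le \sigma^r\langle\xi\rangle^{s+r}e^{\sigma\langle\xi\rangle}$, i.e.\ the second term dominates. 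Since both terms on the right are nonnegative, in either region the sum exceeds the left side, which gives the claimed inequality with $c_1 = e$ and $c_2 = 1$.

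With this symbol bound in hand, I would finish as follows. Setting $F(\xi)=\langle\xi\rangle^s e^{\sigma\langle\xi\rangle}|\hat u(\xi)|$, $G(\xi)=\langle\xi\rangle^s|\hat u(\xi)|$ and $H(\xi)=\sigma^r\langle\xi\rangle^{s+r}e^{\sigma\langle\xi\rangle}|\hat u(\xi)|$, the pointwise inequality reads $F\le c_1 G + c_2 H$ almost everywhere. Taking $L^2$ norms and using monotonicity together with the triangle inequality,
\[
\|J^{s,\sigma}u\| = \|F\|_{L^2}\le \|c_1 G + c_2 H\|_{L^2}\le c_1\|G\|_{L^2}+c_2\|H\|_{L^2} = c_1\|J^s u\|+c_2\sigma^r\|J^{s+r,\sigma}u\|,
\]
which is exactly the assertion. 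The hypothesis $u\in D(J^{r+s}e^{\sigma J})$ guarantees $H\in L^2$ (and hence $F\in L^2$), so all norms are finite and the manipulations are legitimate.

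I do not anticipate a serious obstacle here; the only point requiring a little care is to check that the constants coming from the split are genuinely absolute, that is, independent of the parameters $r,s,\sigma$ and the frequency $\xi$, which the threshold argument at $\sigma\langle\xi\rangle=1$ delivers. An alternative to the explicit region split would be the scalar inequality $e^x\le c_1+c_2\,x^r e^x$ for $x\ge 0$ applied with $x=\sigma\langle\xi\rangle$, but the two-region comparison is the most transparent route.
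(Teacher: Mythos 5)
Your proof is correct. The pointwise symbol bound is valid: on the region $\sigma\langle\xi\rangle\le 1$ you have $e^{\sigma\langle\xi\rangle}\le e$, and on $\sigma\langle\xi\rangle\ge 1$ you have $(\sigma\langle\xi\rangle)^r\ge 1$ (using $r\ge 0$), so indeed
$\langle\xi\rangle^s e^{\sigma\langle\xi\rangle}\le e\,\langle\xi\rangle^s+\sigma^r\langle\xi\rangle^{s+r}e^{\sigma\langle\xi\rangle}$
holds for every $\xi$, with constants independent of $r,s,\sigma,\xi$; Plancherel and the triangle inequality then give the lemma, and the edge cases $\sigma=0$ and $r=0$ cause no trouble. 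The one microscopic point worth making explicit is that finiteness of $\|J^s u\|$ also follows from the hypothesis, since $\langle\xi\rangle^s\le\langle\xi\rangle^{s+r}e^{\sigma\langle\xi\rangle}$ (recall $\langle\xi\rangle\ge 1$), so all three norms are finite, not just the two you mention. As for comparison with the paper: the paper does not prove this lemma at all, but simply cites Lemma 9 of Bona and Gruji\'c \cite{BG-1}, where the analogous estimate is established by essentially the same mechanism (the scalar inequality $e^x\le c_1+c_2\,x^r e^x$ for $x\ge 0$, which you note as your alternative formulation). So your argument supplies what the paper outsources: a short, self-contained Fourier-side proof with explicit absolute constants $c_1=e$, $c_2=1$, obtained by the transparent frequency split at $\sigma\langle\xi\rangle=1$. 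That makes the dependence of the constants on the parameters completely visible, which is exactly the feature the authors need later when they let $\sigma=\sigma(t)$ vary, and it is arguably preferable to the bare citation.
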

\begin{proof}
See Lemma 9 in \cite{BG-1}.
\end{proof}
\begin{lemma}\label{lem32}
Let $s_1,s_2,s$  be such that $s_1 \leq s \leq s_2$ and $\sigma$ be non-negative number. Then 
$$
\|J^{s, \sigma}u\| \leq \|J^{s_1, \sigma}u\|^{\theta}\|J^{s_2, \sigma}u\|^{1-\theta},
$$
where $s =\theta s_1 + (1-\theta) s_2$.
\end{lemma}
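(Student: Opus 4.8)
The plan is to establish this as a straightforward convexity (H\"older) inequality at the level of the Fourier-side integral defining the Gevrey norm. First I would rewrite both sides using the spectral representation from \eqref{def-G1}--\eqref{def-G3}, so that
\[
\|J^{s,\sigma}u\|^2 = \int_{\R} \langle\xi\rangle^{2s}\, e^{2\sigma\langle\xi\rangle}\, |\hat u(\xi)|^2 \, d\xi,
\]
and likewise for the exponents $s_1$ and $s_2$. The key algebraic observation is that the hypothesis $s_1 \le s \le s_2$ forces $0 \le \theta \le 1$ in the relation $s = \theta s_1 + (1-\theta) s_2$, which in turn lets me split the single weight $\langle\xi\rangle^{2s}$ as a product $(\langle\xi\rangle^{2s_1})^{\theta} (\langle\xi\rangle^{2s_2})^{1-\theta}$.

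Next, since the exponential factor $e^{2\sigma\langle\xi\rangle}$ and the factor $|\hat u(\xi)|^2$ are common to all three integrals, I would distribute them across the same $\theta,\,1-\theta$ splitting, thereby writing the integrand of the middle quantity as a product of two nonnegative factors:
\[
\langle\xi\rangle^{2s}\, e^{2\sigma\langle\xi\rangle}\, |\hat u(\xi)|^2 = \Big(\langle\xi\rangle^{2s_1}\, e^{2\sigma\langle\xi\rangle}\, |\hat u(\xi)|^2\Big)^{\theta} \Big(\langle\xi\rangle^{2s_2}\, e^{2\sigma\langle\xi\rangle}\, |\hat u(\xi)|^2\Big)^{1-\theta}.
\]
Applying H\"older's inequality with the conjugate exponents $p = 1/\theta$ and $q = 1/(1-\theta)$, which satisfy $1/p + 1/q = \theta + (1-\theta) = 1$, to these two factors then yields
\[
\|J^{s,\sigma}u\|^2 \le \Big(\|J^{s_1,\sigma}u\|^2\Big)^{\theta}\Big(\|J^{s_2,\sigma}u\|^2\Big)^{1-\theta},
\]
and taking square roots gives exactly the asserted bound.

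The only points requiring care are the degenerate endpoints $\theta = 0$ and $\theta = 1$, that is $s = s_2$ or $s = s_1$, where the H\"older exponents become infinite; but in those cases the claimed inequality is simply an identity and needs no argument, so they are dismissed immediately. I therefore do not expect any genuine obstacle here: the statement is purely a logarithmic-convexity (interpolation) estimate for the weighted $L^2$ norm, and its entire content is the single application of H\"older's inequality described above.
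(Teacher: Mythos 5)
Your proof is correct and is essentially identical to the paper's: the authors also split the weight $\langle\xi\rangle^{2s}e^{2\sigma\langle\xi\rangle}|\hat u(\xi)|^2$ into the $\theta$ and $1-\theta$ factors and apply H\"older's inequality with exponents $1/\theta$ and $1/(1-\theta)$. Your explicit treatment of the endpoint cases $\theta=0,1$ is a small extra care the paper omits, but the argument is the same.
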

\begin{proof}
This inequality is a consequence of the Holder's inequality. In fact
\begin{equation*}
\begin{split}
\|J^{s, \sigma}u\|^2&= \int_{\R} \langle \xi \rangle^{2s}e^{2\sigma \langle \xi \rangle}|\hat{f}(\xi)|^2 d\xi \\
&=  \int_{\R} \left(  \langle \xi \rangle^{2s_1\theta}e^{2\sigma\theta} |\hat{f}(\xi)|^{2\theta}\right)  \left(\langle \xi \rangle^{2(1-\theta) s_2}e^{2\sigma(1-\theta)}|\hat{f}(\xi)|^{2(1-\theta)}\right)d\xi \\
& \leq  \|J^{s_1, \sigma}u\|^{2\theta}\|J^{s_2, \sigma}u\|^{2(1-\theta)}.
\end{split}
\end{equation*}
\end{proof}

Now, we are in position to supply the proof of the main result regarding the evolution of radius of analyticity.
\begin{proof}[Proof of Theorem \ref{global}]

Let $\sigma:= \sigma(t)>0$ and $\eta_0\in G^{\sigma, 2}(\R)$. Consider $v=J^s e^{\sigma J} \eta=: J^{s, \sigma}\eta$, so that 
\begin{equation}\label{vt}
v_t =J^s e^{\sigma J} \eta_t +\sigma' J^{s+1}e^{\sigma J}  \eta. 
\end{equation}

Applying the operator $J^{s, \sigma}$ in \eqref{eq1.9}, we obtain 
\begin{equation}\label{3x11-m}
\begin{split}
v_t= \sigma' J v-i \phi(\partial_x) v -i \tau(\partial_x) J^{s, \sigma}(\eta^2)+\frac{i}8 \psi(\partial_x)J^{s, \sigma}(\eta^3)+\frac{7i}{18} \psi(\partial_x)J^{s, \sigma}(\eta_x^2).
\end{split}
\end{equation}

Multiply both sides of \eqref{3x11-m} by $v$ and then integrate in the space variable, to obtain 
\begin{equation}\label{3x11-m1}
\begin{split}
\frac12 \partial_t \int v^2=&\int  v \sigma' J v-i  \int  v \Phi(\partial_x) v -i \int v J^{s, \sigma}\tau(\partial_x) (\eta^2)+\frac{i}8 \int vJ^{s, \sigma}\psi(\partial_x)(\eta^3)\\
&+\frac{7i}{18} \int vJ^{s, \sigma}\psi(\partial_x)(\eta_x^2).
\end{split}
\end{equation}
Observe that $i \Phi(\partial_x) v =\partial_x \mathcal{K} v
= \mathcal{K}\partial_x v$, where
$$
\widehat{\mathcal{K} v}(\xi)=
\frac{1-\gamma_2\xi^2+\delta_2\xi^4}{\varphi(\xi)}\widehat{v}(\xi).
$$

Note that
\begin{equation}\label{nov3x11-m1}
i  \int  v \Phi(\partial_x) v=\int v \partial_x \mathcal{K} v=-\int (\partial_x v) (\mathcal{K} v).
\end{equation}

Using the commutativity of the operator $ \mathcal{K}$  with $\partial_x$ and the fact that  $ \mathcal{K}$  is  symmetric, one has
\begin{equation}\label{nov13x11-m1}
i  \int  v \Phi(\partial_x) v=\int v \partial_x \mathcal{K} v=\int v  \mathcal{K}  \partial_x v=\int (\mathcal{K} v )(\partial_x v).
\end{equation}

Now, combining \eqref{nov3x11-m1} and \eqref{nov13x11-m1} we conclude that 
$$
i  \int  v \Phi(\partial_x) v=0.
$$

Using the estimates from Lemmas  \ref{BT1}, \ref{Lema1}, \ref{P1}, \ref{lema2.5} and the Lemma \ref{lem31}, we get
\begin{equation}\label{3x11-m2}
\begin{split}
\frac12 \partial_t \| &J^s e^{\sigma J} \eta\|^2- \sigma'\| J^{s+1/2} e^{\sigma J} \eta\|^2 \lesssim  \| J^{s} e^{\sigma J} \eta\|^3 +\| J^{s} e^{\sigma J} \eta\|^4\\
 &\lesssim   \| J^{s} \eta\|^3 + \| J^{s} \eta\|^4+ \sigma\| J^{s+1/3} e^{\sigma J} \eta\|^3 + \sigma\| J^{s+1/4} e^{\sigma J} \eta\|^4.
\end{split}
\end{equation}

Considering the interpolation estimate in Lemma \ref{lem32}, it follows that
\begin{equation}\label{3x11-m3}
\begin{split}
\| J^{s+1/4} e^{\sigma J} \eta\| \leq \| J^{s+1/2} e^{\sigma J} \eta\|^{1/2} \| J^{s} e^{\sigma J} \eta\|^{1/2},
\end{split}
\end{equation}
and
\begin{equation}\label{3x11-m4}
\begin{split}
\| J^{s+1/3} e^{\sigma J} \eta\| &\leq \| J^{s+1/2} e^{\sigma J} \eta\|^{2/3} \| J^{s} e^{\sigma J} \eta\|^{1/3}.
\end{split}
\end{equation}

An use of  the estimates \eqref{3x11-m3} and \eqref{3x11-m4} in  \eqref{3x11-m2}, yields
\begin{equation}\label{3x11-m5}
\begin{split}
\frac12 \partial_t \| J^s e^{\sigma J} \eta\|^2&- \sigma'\| J^{s+1/2} e^{\sigma J} \eta\|^2 \lesssim  \| J^{s} \eta\|^3 + \| J^{s} \eta\|^4 \\
 &+ \sigma\| J^{s+1/2} e^{\sigma J} \eta\|^2\| J^{s} e^{\sigma J} \eta\|+\sigma\| J^{s+1/2} e^{\sigma J} \eta\|^2\| J^{s} e^{\sigma J} \eta\|^2.
\end{split}
\end{equation}
Thus
\begin{equation}\label{3x11-m6}
\begin{split}
\frac12 \partial_t \| &J^s e^{\sigma J} \eta\|^2+C\left(- \sigma'- \sigma\| J^{s} e^{\sigma J} \eta\|- \sigma\| J^{s} e^{\sigma J} \eta\|^2  \right)\| J^{s+1/2} e^{\sigma J} \eta\|^2 \lesssim \| J^{s} \eta\|^3 + \| J^{s} \eta\|^4.
\end{split}
\end{equation}

Now, if
\begin{equation}\label{3x11-m7}
\begin{split}
- \sigma'- \sigma\| J^{s} e^{\sigma J} \eta\|- \sigma\| J^{s} e^{\sigma J} \eta\|^2  = 0
\end{split}
\end{equation}
and $s=2$, then from \eqref{3x11-m6} and \eqref{consH2}, one gets
\begin{equation}\label{3x11-m8}
\begin{split}
\frac12 \partial_t \| J^2 e^{\sigma J} \eta\|^2& \lesssim \| J^{2} \eta\|^3 + \| J^{2} \eta\|^4  \sim \| J^{2} \eta_0\|^3 + \| J^{2} \eta_0\|^4.
\end{split}
\end{equation} 

From \eqref{3x11-m8} one can infer that
\begin{equation}\label{3x11-m9}
\begin{split}
\| J^2 e^{\sigma J} \eta\| \leq\| J^2 e^{\sigma_0 J} \eta_0\|+Ct^{1/2}\left(\| J^{2} \eta_0\|^{3/2} + \| J^{2} \eta_0\|^2 \right)=:\mathcal{X}_0+t^{1/2}\mathcal{Y}_0.
\end{split}
\end{equation}

The estimate \eqref{3x11-m9} and a blow-up alternative imply that if $T_{s}$ is the maximal time of existence of solution $\eta$ to the IVP \eqref{5kdvbbm}, then $T_{s}=\infty$. We prove this by using a contradiction argument. If possible suppose that $0<T_{s}<\infty$.  Let $L$ and $F$ be the linear and nonlinear parts of the IVP \eqref{5kdvbbm}. Then for any $0<T< T_s$, we have
\begin{equation}\label{1x5kdvbbm}
\begin{cases}
L \eta+F\eta=0,  \quad 0\leq t \leq T < T_s, \\
\eta(x,0)=\eta_0(x).
\end{cases} 
\end{equation}
We consider also the IVP
\begin{equation}\label{2x5kdvbbm}
\begin{cases}
L u+Fu=0,  \quad 0\leq t \leq  T_0, \\
u(x,0)=\eta(x, T).
\end{cases} 
\end{equation}
where $T_0$ is the local existence time given by \eqref{r2.45}, i.e.
$$
T_0= \frac1{8C_s\|\eta(\cdot, T)\|_{G^{\sigma,s}}(1+\|\eta(\cdot, T)\|_{G^{\sigma,s}})}.
$$ 

Using \eqref{3x11-m9}  and \eqref{consH2}, we get
\begin{equation*}
\begin{split}
\| \eta(\cdot, T)\|_{G^{\sigma,2}}\leq
\| \eta_0\|_{G^{\sigma,2}}+ CT_s^{1/2}\left(\| \eta_0\|_{H^2}^{3/2} + \|  \eta_0\|_{H^2}^2 \right).
\end{split}
\end{equation*}
Thus
$$
T_0\geq  \frac1{8C_s\left( 1+\| \eta_0\|_{G^{\sigma,2}}+ CT_s^{1/2}\left(\| \eta_0\|_{H^2}^{3/2} + \|  \eta_0\|_{H^2}^2 \right) \right)^2}=:\mathcal{T}_0.
$$ 

Now, we choose $0<T<T_s$ such that
$$
T+ \mathcal{T}_0 > T_s.
$$
If $v$ is such that $u(x,t)=v(x, T+t)$, $0\leq t \leq T_0$, we obtain 
\begin{equation*}
w(x,t)=\begin{cases}
\eta(x,t),  \quad 0\leq t \leq T, \\
v(x,t),  \quad T\leq t \leq T+T_0,
\end{cases} 
\end{equation*}
is also a solution of the IVP \eqref{5kdvbbm}, with initial data $\eta_0$ in the time interval $[0, T+T_0]$ with $T+T_0> T_s$, which contradicts the definition of the maximality of $T_s$. Hence the solution is global.

Now, we move to find the lower and upper bounds for $\sigma(t)$. Note that using \eqref{3x11-m9}, the estimate  \eqref{3x11-m7} is true if 
\begin{equation}\label{3x11-m10}
\begin{split}
- \sigma' &=  \sigma\| J^{2} e^{\sigma J} \eta\|+ \sigma\| J^{2} e^{\sigma J} \eta\|^2 \\
&\leq  \sigma\left( \mathcal{X}_0+t^{1/2}\mathcal{Y}_0+2\mathcal{X}_0^2+2t\mathcal{Y}_0^2 \right)  =:\sigma \, \mathcal {A}(t).
\end{split}
\end{equation}

 On the other hand, the estimate \eqref{3x11-m10}  is equivalent to
\begin{equation}\label{3x11-m11}
\begin{split}
\sigma(t)& \geq  \sigma_0 e^{-\int_0^t \mathcal {A}(t') dt'}\\
&= \sigma_0 e^{-(\mathcal{X}_0+2\mathcal{X}_0^2)t-\frac32 t^{3/2}\mathcal{Y}_0-t^2 \mathcal{Y}_0^2 },
\end{split}
\end{equation}
where $\sigma_0= \sigma(0)$. This  provides the lower bound for $\sigma(t)$.

 Now, we proceed  to find an upper bound $\sigma(t)$. Considering \eqref{3x11-m10} and \eqref{consH2}, we have
\begin{equation*}\label{3x11-m13}
\begin{split}
-\sigma' &\geq  \sigma\| J^{2} e^{\sigma J} \eta\|^2\\
& \geq  \sigma\| J^{2}  \eta\|^2\\
& \gtrsim \sigma\| J^{2}  \eta_0 \|^2.
\end{split}
\end{equation*}
Consequently
\begin{equation*}\label{3x11-m14}
\begin{split}
\sigma(t) \leq  C \sigma_0 e^{-\| J^{2}  \eta_0 \|^2 t}.
\end{split}
\end{equation*}
\end{proof}

\section*{Acknowledgments}
The first author extends thanks to PRONEX-FAPERJ for the support received. The second author acknowledges the grants from FAPESP (2020/14833-8) and CNPq (307790/2020-7), and  is also thankful to the IM-UFRJ for pleasant hospitality where a part of this work was developed.


\end{document}